\let\frak\mathfrak
\def\>{\relax\ifmmode\mskip.666667\thinmuskip\relax\else\kern.111111em\fi}
\def\<{\relax\ifmmode\mskip-.333333\thinmuskip\relax\else\kern-.0555556em\fi}
\def\vsk#1>{\vskip#1\baselineskip}
\def\vv#1>{\vadjust{\vsk#1>}\ignorespaces}
\def\vvn#1>{\vadjust{\nobreak\vsk#1>\nobreak}\ignorespaces}
  \let\ssize\scriptstyle
\let\sssize\scriptscriptstyle
\let\Medskip\medskip
\def\medskip{\par\Medskip}
\let\Bigskip\bigskip
\def\bigskip{\par\Bigskip}
\let\Maketitle\maketitle
\def\maketitle{\Maketitle\thispagestyle{empty}\let\maketitle\empty}
\newtheorem{thm}{Theorem}[section]
\newtheorem{cor}[thm]{Corollary}
\newtheorem{lem}[thm]{Lemma}
\newtheorem{ex}[thm]{Example}
\theoremstyle{definition}                                  
\numberwithin{equation}{section}
\theoremstyle{definition}
\newtheorem*{rem}{Remark}
\let\mc\mathcal
\let\nc\newcommand
\let\al\alpha
\let\bt\beta
\let\dl\delta
\let\la\lambda
\let\phi\varphi
\let\der\partial
\let\geq\geqslant
\let\leq\leqslant
\let\on\operatorname
\let\bi\bibitem
\let\bs\boldsymbol
\def\C{{\mathbb C}}
\def\Z{{\mathbb Z}}
\def\F{{\mathbb F}}   
\def\+#1{^{\{#1\}}}
\def\beq{\begin{equation}}
\def\eeq{\end{equation}}
\def\be{\begin{equation*}}
\def\ee{\end{equation*}}
\nc{\bea}{\begin{eqnarray*}}
\nc{\eea}{\end{eqnarray*}}
\nc{\bean}{\begin{eqnarray}}
\nc{\eean}{\end{eqnarray}}
\let\ga\gamma
\let\Ga\Gamma
\nc{\Il}{{\mc I_{\bs\la}}}
\nc{\bla}{{\bs\la}}
\nc{\Fla}{\F_\bla}
\nc{\tfl}{{T^*\Fla}}
\nc{\GL}{{GL_n(\C)}}
\nc{\GLC}{{GL_n(\C)\times\C^*}}
\let\sd s 
\def\ddk_#1{\kk_{#1}\<\>\frac\der{\der\<\>\kk_{#1}}}
\def\bul{\mathbin{\raise.2ex\hbox{$\sssize\bullet$}}}
\def\intt{\mathchoice
{\mathop{\raise.2ex\rlap{$\,\,\ssize\backslash$}{\intop}}\nolimits}
{\mathop{\raise.3ex\rlap{$\,\sssize\backslash$}{\intop}}\nolimits}
{\mathop{\raise.1ex\rlap{$\sssize\>\backslash$}{\intop}}\nolimits}
{\mathop{\rlap{$\sssize\<\>\backslash$}{\intop}}\nolimits}}
\let\kk q 
\let\cc c
\let\Ko K
\def\GZ/{Gelfand-Zetlin}
\def\KZ/{{\slshape KZ\/}}
\def\qKZ/{{\slshape qKZ\/}}
\def\XXX/{{\slshape XXX\/}}
\nc{\A}{{\mc C}}
\nc{\hsl}{\widehat{{\frak{sl}_2}}}
\nc{\BC}{{ \mathbb C}}
\nc{\lra}{\longrightarrow}
\nc{\CO}{{\mathcal{O}}}
\nc{\BZ}{{ \mathbb Z}}
\nc{\hfn}{\hat{\frak{n}}}
\begin{document}

\hrule width0pt
\vsk->
\title[Notes on $2D$ $\F_p$-Selberg integrals]
{Notes on $2D$ $\F_p$-Selberg integrals}

\author[Alexander Varchenko]
{Alexander Varchenko}

\maketitle

\begin{center}

{\it Department of Mathematics, University
of North Carolina at Chapel Hill\\ Chapel Hill, NC 27599-3250, USA\/}

\end{center}

{\let\thefootnote\relax
\footnotetext{\vsk-.8>\noindent
{\sl E-mail}:\enspace anv@email.unc.edu
}}

\begin{abstract}

We prove a two-dimensional $\F_p$-Selberg integral formula, in which
the two-dimensional $\F_p$-Selberg integral $\bar S(a,b,c;l_1,l_2)$
depends on positive integer parameters $a,b,c$,  $l_1,l_2$ and 
is an element of the finite field $\F_p$ with odd prime number $p$ of  elements.
The formula is motivated by the analogy between  multidimensional hypergeometric solutions of the KZ equations and
 polynomial solutions of the same equations reduced modulo $p$.

\end{abstract}

{\small\tableofcontents\par}

\section{Introduction}
In 1944 Atle Selberg proved the following integral formula:
\bean
\label{clS}
&&
\int_0^1\dots\int_0^1 \prod_{1\leq i<j\leq n} (x_i-x_j)^{2\ga} \prod_{i=1}^nx_i^{\al-1} (1-x_i)^{\beta-1}\ dx_1\dots dx_n
\\
&&
\notag
\phantom{aaaaaa}
=\
\prod_{j=1}^n \frac{\Ga(1+j\ga)}{\Ga(1+\ga)}\,
\frac{\Ga(\al+(j-1)\ga)\,\Ga(\beta+(j-1)\ga)}
{\Ga(\al+\beta + (n+j-2)\ga)}\,,
\eean
see  \cite{Se, AAR}.
Hundreds of papers are devoted to the generalizations of the Selberg
integral formula and its applications, see for example \cite{AAR, FW} and references 
therein.  There are $q$-analysis versions of the formula,
the generalizations associated with Lie algebras, elliptic versions, finite field versions,
see some references  in \cite{AAR, FW}.
In the finite field versions,
 one considers additive and multiplicative characters of a finite field, which 
map the  field to the field of
complex numbers, and forms an analog of equation \eqref{clS}, in which  both sides are complex numbers.
The simplest of such formulas is the classical relation between Jacobi and Gauss 
sums, see \cite{AAR}.

\vsk.2>
In \cite{RV1}, another  version of the Selberg integral formula
was presented, in which the
$\F_p$-Selberg integral is an element of the  finite field $\F_p$ with an odd prime number $p$ 
 of elements, see also \cite{RV2}.  Given non-negative integers  $a,b,c$, consider the master polynomial
$\Phi_n \in \F_p[x_1,\dots,x_n]$,
\bea
\Phi_n = \prod_{1\leq i<j\leq n} (x_i-x_j)^{2c} \prod_{i=1}^nx_i^a (1-x_i)^b.
\eea
Denote by $\bar S(a,b,c)$ the coefficient of the monomial $x_1^{p-1} \dots x_n^{p-1}$ in $\Phi_n$
and call it the $\F_p$-Selberg integral.

\begin{thm} [{\cite[Theorem 4.1]{RV1}}]
\label{thm nd}

Assume that $a,b,c$ are non-negative integers such that
\bean
\label{abc n eq}
 p-1\leq a+b+(n-1)c,
\qquad a+b+(2n-2)c < 2p-1\ .
\eean
Then we have a formula in $\F_p$:
\bean
\label{main n}
\bar S(a,b,c) =
(-1)^n\,\prod_{j=1}^n \frac{(jc)!}{c!}\,
\frac{(a+(j-1)c)!\,(b+(j-1)c)!}
{(a+b + (n+j-2)c+1-p)!}\,.
\eean
\end{thm}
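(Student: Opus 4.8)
The plan is to read the coefficient extraction as a discrete Selberg integral and run an Aomoto-type induction, with ``the integral of an exact form vanishes'' replaced by its characteristic-$p$ shadow. Write $\langle f\rangle$ for the coefficient of $x_1^{p-1}\cdots x_n^{p-1}$ in $f\in\F_p[\xxx]$, so that $\bar S(a,b,c)=\langle\Phi_n\rangle$. The engine is the identity $\langle\der_{x_i}g\rangle=0$, valid for every polynomial $g$ because the coefficient of $x_i^{p-1}$ in $\der_{x_i}g$ is $p$ times the coefficient of $x_i^{p}$ in $g$, hence $0$ in $\F_p$; this is the substitute for integration by parts. It is also useful to record the ``integral'' reading $\langle f\rangle=(-1)^n\sum_{\bs x\in\F_p^n}f(\bs x)$, valid whenever the degree of $f$ in each variable is $<2(p-1)$, which follows from $\sum_{x\in\F_p}x^m=-1$ for $0<m\equiv0\pmod{p-1}$ and $=0$ otherwise. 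This reading makes the symmetry $\bar S(a,b,c)=\bar S(b,a,c)$ transparent via $x_i\mapsto 1-x_i$, and it pinpoints the role of \eqref{abc n eq}: the upper bound keeps the per-variable degree within the window where $\langle\cdot\rangle$ behaves like a genuine sum, while together with the lower bound it confines every denominator factorial of \eqref{main n} to the range $[0,p-1]$, where it is a unit.

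I would take $c=0$ as the base of the induction, since it settles all $n$ at once. There $\Phi_n=\prod_{i=1}^n x_i^a(1-x_i)^b$ factorizes, so $\bar S(a,b,0)=\big([x^{p-1}]\,x^a(1-x)^b\big)^n$. The one-variable coefficient is $(-1)^{p-1-a}\binom{b}{p-1-a}$ (and $0$, as is the right side, once $a\ge p$); rewriting the binomial and invoking the Wilson-type identity $a!\,(p-1-a)!=(-1)^{a+1}$ turns it into $-\,a!\,b!/(a+b+1-p)!$. Raising to the $n$-th power reproduces the right-hand side of \eqref{main n} at $c=0$, where each factor $(jc)!/c!$ is $1$. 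At $c=0$ the hypotheses read $p-1\le a+b\le 2p-2$, which is exactly what places $a+b+1-p$ in $[0,p-1]$.

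For the inductive step I would derive from $\langle\der_{x_i}(\cdot)\rangle=0$ the $\F_p$ analogues of Aomoto's shift relations. These do \emph{not} coincide with the naive reduction of the classical ratios modulo $p$; rather they carry reflected arguments forced by $\langle\cdot\rangle$, as one already sees at $n=1$, where the correct $a$-shift is $\bar S(a+1,b)/\bar S(a,b)=(a+1)/(a+b+2-p)$. In general I expect the $a$-shift
\beq
\bar S(a+1,b,c)=\bar S(a,b,c)\,\prod_{j=1}^n\frac{a+1+(j-1)c}{a+b+(n+j-2)c+2-p}\,,
\eeq
its $b$-mirror, and a $c$-shift obtained by differentiating against the discriminant $\prod_{i<j}(x_i-x_j)$. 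One checks by cancelling factorials that the right-hand side of \eqref{main n} obeys the same relations, so equality propagates along any chain of shifts from the $c=0$ base. To reach an arbitrary $(a,b,c)$ in the region one uses that the combined move $(a,b,c)\mapsto(a+(n-1),b,c-1)$ preserves $a+b+(n-1)c$ and strictly lowers $a+b+(2n-2)c$, hence stays inside \eqref{abc n eq} and terminates at $c=0$; reversing this path, the $c$-shift together with $(n-1)$ applications of the $a$-shift carries the base identity to the target.

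The main obstacle is twofold, and both parts are governed by \eqref{abc n eq}. First, the $c$-shift is Aomoto's hardest step: differentiating the discriminant expression and showing, modulo $p$, that all but the wanted contribution cancels requires every intermediate polynomial to keep per-variable degree $<2(p-1)$, so that $\langle\cdot\rangle$ still equals the sum over $\F_p^n$ with no contamination from the exponent $2(p-1)$, the second positive multiple of $p-1$; the threshold case, where the per-variable degree equals $2(p-1)$, must be handled by a separate boundary analysis. Second, transporting the identity along the shifts needs the ratio factors to be units: a factor $a+1+(j-1)c$ or $a+b+(n+j-2)c+2-p$ vanishing modulo $p$ marks a boundary locus where the two-term relation degenerates, and there $\bar S$ must be pinned down directly rather than by propagation. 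Verifying that the two inequalities in \eqref{abc n eq} are precisely what keep the denominator factorials in $[0,p-1]$, validate the $c=0$ base, and let the navigation terminate is where the genuine care lies.
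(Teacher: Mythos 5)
Your reduction engine ($\langle\der_{x_i}g\rangle=0$), your $c=0$ base case, and your $a$- and $b$-shift relations are all sound --- the $a$-shift you write is, at $n=2$, exactly the Aomoto-type recursion the paper derives in Theorem \ref{thm Ao} and its Corollary, and it does extend to general $n$. But the argument has a genuine gap at its load-bearing step: the $c$-shift. Every move in your navigation $(a,b,c)\mapsto(a+(n-1),b,c-1)$ requires a two-term relation between $\bar S(a,b,c)$ and $\bar S(a+(n-1),b,c-1)$, and no such relation comes out of integration by parts. Differentiating an expression $g\cdot\Phi_n$ against the factor $\prod_{i<j}(x_i-x_j)^{2c}$ produces the logarithmic derivative $\sum_{j\ne i}\tfrac{2c}{x_i-x_j}$, which after symmetrization yields identities among $\F_p$-integrals of $\Phi_n$ times various symmetric polynomials; it never lowers the exponent $2c$ itself, since that would require dividing by the full discriminant, which changes the degree in each variable by $2(n-1)$ and leaves the family. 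This is precisely the classical difficulty with the $\gamma$-parameter in Selberg's formula: Aomoto's method delivers the $\alpha$- and $\beta$-shifts, while the $\gamma$-dependence in every known proof needs a separate device (Selberg's induction on $n$, Anderson's argument, or Carlson's theorem --- the last unavailable over $\F_p$, where one cannot even interpolate in $c$ beyond a bounded window). Your remark that one ``checks by cancelling factorials that the right-hand side obeys the same relations'' is circular for the $c$-shift, since the relation itself has not been produced. Without it the $a$- and $b$-shifts never change $c$ and the induction cannot reach its base case.

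For contrast, the paper sidesteps this entirely: it identifies the coefficient over $\Z$ with the Morris constant term \eqref{Mid} (whose $\gamma$-dependence is already settled by the complex Selberg integral), checks that the hypotheses \eqref{abc n eq} make Morris's parameters $\al$, $\bt$, $\ga$ non-negative integers, and then reduces the resulting explicit integer mod $p$ using Wilson-type cancellations $a!\,(p-1-a)!=(-1)^{a+1}$ --- this is carried out in the proof of Theorem \ref{thm 2d1}(i), the $n=2$ instance of the statement. To make your purely characteristic-$p$ route work you would need either an honest $c$-recursion (say, an Anderson-type argument adapted to $\F_p$-integrals) or to trade the $c$-induction for an induction on $n$. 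The boundary issues you flag (denominator factors $\equiv 0 \bmod p$, per-variable degree hitting $2(p-1)$) are real but secondary to this missing step.
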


The master polynomial $\Phi_n$ is an analog of the integrand in \eqref{clS}.
The operation of choosing the coefficient 
of $x_1^{p-1} \dots x_n^{p-1}$ in $\Phi_n$ is an analog of the integration 
of $\Psi_n$ over a cycle
 due to the following Stokes-like observation. For any $\Psi \in \F_p[x_1,\dots,x_n]$ and any positive
integers $l_1,\dots,l_n$, the coefficient of
$x_1^{l_1p-1} \dots x_n^{l_np-1}$  in  any first partial derivative 
$\frac {\der \Psi}{\der x_i}$  equals zero.

\vsk.2>

In this paper we consider the case $n=2$. For positive integers 
$a,b,c$,\ $l_1,l_2$ we denote by  $\bar S(a,b,c;l_1,l_2)$ the coefficient of 
$x_1^{l_1p-1}x_2^{l_2p-1}$ in $\Phi_2$ and call it a two-dimensional $\F_p$-Selberg integral.
Clearly, $\bar S(a,b,c;l_1,l_2)=\bar S(a,b,c;l_2,l_1)$,

\vsk.2>

We assume that $0<a,b,c<p$ and evaluate $\bar S(a,b,c;l_1,l_2)$ in all non-zero cases.  This is the main result of this paper. 
It is interesting  that  in all cases, the $\F_p$-Selberg integral 
$\bar S(a,b,c;l_1,l_2)$  is given by a formula analogous  to formula \eqref{main n} with some shifts
by $p$ in factorials.       

\vsk.2>

Here, in the introduction  we formulate a theorem that
lists all the integers $0<a,b,c<p$ such  that there are more than one pair $l_1\leq l_2$ with non-zero
$\bar S(a,b,c;l_1,l_2)$, see Theorem \ref{thm rels}.

\begin{thm}
\label{thm intro}

If   there are more than one pair $l_1\leq l_2$ such that
$\bar S(a,b,c;l_1,l_2)$ is non-zero, then all such $(a,b,c;l_1,l_2)$ are listed below.
\begin{enumerate}
\item[(i)]

If $2c<p, \ a+c\leq p-1, \ b+c\geq p,  \  a+b+2c\geq 2p-1$,
then
$\bar S(a,b,c;1,1)$,  $\bar S(a,b,c;1,2)$, $\bar S(a,b,c;2,1)$ are non-zero and
\bean
\label{irel 11-12}
&&
\phantom{aaaa}
-\frac 12\,\bar S(a,b,c;1,1) = \bar S(a,b,c;1,2) = \bar S(a,b,c;2,1),
\\
\label{iexc1}
&&
\bar S(a,b,c;1,1) = \frac{(2c)!}{c!}\,\frac{a!\,(a+c)!\,b!\,(b+c-p)!}{(a+b+c-p+1)!\,(a+b+2c-2p+1)!}\,.
\eean

\item[(ii)]

If $2c<p, \  a+b+c \geq  2p-1$,
 then
$\bar S(a,b,c;2,2)$,  $\bar S(a,b,c;1,2)$, $\bar S(a,b,c;2,1)$ are non-zero and
\bean
\label{irel 22-12}
&&
\phantom{aaaa}
-\frac 12\,\bar S(a,b,c;2,2) = \bar S(a,b,c;1,2) = \bar S(a,b,c;2,1).
\\
\label{iexc2}
&&
\bar S(a,b,c;2,2) =
 - \frac{(2c)!}{c!}\,\frac{a!\,(a+c-p)!\,b!\,(b+c-p)!}{(a+b+c-2p+1)!\,(a+b+2c-2p+1)!}\,.
\eean

\item[(iii)]

If  $2c>p, \quad   a+b+2c\geq  3p-1$, 
then
$\bar S(a,b,c;2,2)$, $\bar S(a,b,c;1,3)$, $\bar S(a,b,c;3,1)$
are non-zero and
\bean
\label{irel 22-13}
&&
\phantom{aaaa}
-\frac 12\,\bar S(a,b,c;2,2) =\bar S(a,b,c;1,3) = \bar S(a,b,c;3,1),
\\
\label{iexc3}
&&
\bar S(a,b,c;2,2) =
 - \frac{(2c-p)!}{c!}\,\frac{a!\,(a+c-p)!\,b!\,(b+c-p)!}{(a+b+c-2p+1)!\,(a+b+2c-3p+1)!}\,.
\eean

\end{enumerate}

\end{thm}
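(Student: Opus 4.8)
The plan is to reduce the statement to a complete evaluation of $\bar S(a,b,c;l_1,l_2)$, which I carry out directly, and then to read off the classification and the linear relations. First I would bound the admissible pairs. Since $0<a,b,c<p$, the $x_i$-degree of $\Phi_2$ ranges over $[\,a,\ a+b+2c\,]\subset[1,4p-4]$, so the coefficient of $x_1^{l_1p-1}x_2^{l_2p-1}$ can be nonzero only for $l_1,l_2\in\{1,2,3\}$. Moreover, $\Phi_2$ has total degree between $2a+2c$ and $2a+2b+2c$, forcing $2a+2c\le(l_1+l_2)p-2\le 2a+2b+2c$; this together with the per-variable bound $a\le l_ip-1\le a+b+2c$ leaves only a handful of candidate pairs in each parameter regime.

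Next I would write each $\bar S(a,b,c;l_1,l_2)$ as a single sum by expanding the three factors of $\Phi_2$. Using $(x_1-x_2)^{2c}=\sum_k(-1)^k\binom{2c}{k}x_1^{2c-k}x_2^{k}$ and $(1-x_i)^b=\sum_m(-1)^m\binom{b}{m}x_i^{m}$, matching exponents to $x_1^{l_1p-1}x_2^{l_2p-1}$ determines two summation indices in terms of the third, giving
\be
\bar S(a,b,c;l_1,l_2)=(-1)^{l_1+l_2}\sum_k(-1)^{k}\binom{2c}{k}\binom{b}{l_1p-1-2c-a+k}\binom{b}{l_2p-1-a-k},
\ee
where the residual sign collapses to $(-1)^k$ times the overall sign $(-1)^{l_1+l_2}$. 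To evaluate this sum in $\F_p$ I would apply Lucas' theorem to each binomial, writing its arguments in base $p$. Because $0<a,b,c<p$, every argument has at most two base-$p$ digits, and whether a carry occurs is governed precisely by the linear inequalities in the hypotheses, for instance $2c<p$ versus $2c>p$, $a+c\le p-1$ versus $a+c\ge p$, and $b+c\ge p$. In each regime the sum becomes a terminating balanced hypergeometric series, summable by the $\F_p$ form of the Pfaff--Saalsch\"utz/Vandermonde identity, and the resulting quotient of factorials acquires exactly the shifts by $p$ recorded in \eqref{iexc1}, \eqref{iexc2}, \eqref{iexc3}. The same digit analysis yields the exact non-vanishing criterion for each pair: $\bar S(a,b,c;l_1,l_2)\ne 0$ iff every factorial argument of its closed form lies in $\{0,\dots,p-1\}$ and no prohibited carry kills a factor. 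I expect this carry bookkeeping to be the main obstacle, since the closed form genuinely changes shape across the thresholds.

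With the closed forms in hand, the linear relations follow cleanly. The symmetry $\Phi_2(x_1,x_2)=\Phi_2(x_2,x_1)$ gives $\bar S(a,b,c;l_1,l_2)=\bar S(a,b,c;l_2,l_1)$ for free, accounting for $\bar S(\cdot;1,2)=\bar S(\cdot;2,1)$ and $\bar S(\cdot;1,3)=\bar S(\cdot;3,1)$. The factor $-\tfrac12$ in \eqref{irel 11-12}, \eqref{irel 22-12}, \eqref{irel 22-13} then emerges by dividing the closed forms: in each case the ratio $\bar S(\cdot;l,l)/\bar S(\cdot;l_1,l_2)$ reduces to $-2$ after cancelling the common prefactor $(2c)!/c!$ (respectively $(2c-p)!/c!$) and a single shifted factorial. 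One subtlety worth isolating is that no bounded-degree Stokes identity can relate these values, since the monomials $x_1^{l_1p-1}x_2^{l_2p-1}$ for the relevant pairs sit in total degrees differing by $p$; hence the relations must be extracted from the explicit evaluation rather than from naive integration by parts.

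Finally I would assemble the classification. For all $0<a,b,c<p$ I would tabulate the non-vanishing region of each candidate pair $l_1\le l_2$ obtained above, then intersect these regions: requiring two distinct pairs to be simultaneously nonzero pins down exactly the three systems of inequalities in (i), (ii), (iii), while in every other parameter range at most one pair survives. Checking that the three displayed regions are mutually disjoint and exhaust the multi-solution locus, and that the accompanying formulas \eqref{iexc1}, \eqref{iexc2}, \eqref{iexc3} hold on each, completes the proof.
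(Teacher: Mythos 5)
Your overall architecture --- bound the candidate pairs by degree, evaluate every $\bar S(a,b,c;l_1,l_2)$ in closed form, then intersect the non-vanishing regions --- is the same as the paper's, and your preliminary steps are sound: the degree bounds forcing $l_i\le 3$ are correct, and so is the single-sum formula
$\bar S(a,b,c;l_1,l_2)=(-1)^{l_1+l_2}\sum_k(-1)^k\binom{2c}{k}\binom{b}{l_1p-1-a-2c+k}\binom{b}{l_2p-1-a-k}$.
The genuine gap is the central evaluation step. This sum is a terminating ${}_3F_2$ of \emph{well-poised} (Dixon) type when $l_1=l_2$ and of no classical summable type when $l_1\ne l_2$; it is not balanced, so ``summable by the $\F_p$ form of Pfaff--Saalsch\"utz/Vandermonde'' is not available as stated. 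For the diagonal cycles the identity you actually need is Morris' constant-term identity (equivalently the $n=2$ Selberg evaluation via Dixon's theorem), which is exactly what the paper imports in Theorems \ref{thm 2d1} and \ref{thm 2d2}; for the off-diagonal cycles there is no product formula to quote at all --- consistent with the fact that $\bar S(a,b,c;1,2)$ vanishes generically and is nonzero only on special loci. Compounding this, applying Lucas' theorem termwise splits each binomial according to base-$p$ digits of quantities involving the \emph{running index} $k$, so the sum breaks into ranges of $k$ on which the summand changes shape, and you would then have to evaluate truncated well-poised sums, which generally have no closed form. You flag this as ``carry bookkeeping,'' but it is the actual mathematical content of the theorem and is left unresolved.

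The paper circumvents all of this for the off-diagonal cycles by a mechanism you explicitly set aside: Stokes identities in $x_1,x_2$ produce recursions in $a$ and $b$ (formulas \eqref{Ar1}--\eqref{Ar2}) that slide the parameters to a boundary value ($\dl=0$ or $a=p-1$) where the two-dimensional integral collapses to a one-dimensional $\F_p$-beta integral (Lemma \ref{lem bfun}); the same recursions are also used to detect the vanishing cases in Theorem \ref{thm 2d12}. Your final step --- deriving the $-\tfrac12$ relations by dividing closed forms --- does match the paper's proof of Theorem \ref{thm rels} and your ratio computation is right, except that the off-diagonal prefactors are $(2c-1)!/(c-1)!$ and $(2c-1-p)!/(c-1)!$, not $(2c)!/c!$. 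Note also that your claim that the relations ``must be extracted from the explicit evaluation'' is contradicted by Section \ref{sec 4.8}, where \eqref{rel 22-13} is proved structurally from $(x_1-x_2)^{2c}=(x_1-x_2)^{2c-p}(x_1^p-x_2^p)$ in characteristic $p$. To repair your route you would need either to prove the required $\F_p$-analogue of Dixon/Morris directly (including its Lucas-truncated versions), or to fall back on Morris' identity plus the Stokes recursions as the paper does.
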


If $(a,b,c)$  does not satisfy the system of inequalities,
$2c<p, \ a+c\leq p-1, \ b+c\geq p,  \  a+b+2c\geq 2p-1$,
and does not satisfy the  system of  inequalities 
$2c<p, \ a+b+c \geq  2p-1$,  
and does not satisfy the  system of  inequalities,
$2c>p, \   a+b+2c\geq  3p-1$,
then there exists at most one pair
$(l_1,l_2)$ with $l_1\leq l_2$ such that $\bar S(a,b,c;l_1,l_2)\ne 0$.

\vsk.2>

It would be interesting to prove relations \eqref{irel 11-12} and \eqref{irel 22-12} without evaluating their terms, see
Section \ref{sec 4.8} where that was done for the relations \eqref{irel 22-13}.

It would also be  interesting to evaluate all $n$-dimensional $\F_p$-Selberg integrals
\\
$\bar S(a,b,c;l_1,\dots, l_n)$ for arbitrary $n$ and find all relations between them.

\vsk.2>

This paper is a part of the study of solutions of the KZ equations in finite characteristic, see for example 
\cite{SV2, EV, VV}.
Formulas in this paper are 
 motivated by the analogy between  multidimensional hypergeometric solutions of the KZ equations and
 polynomial solutions of the same equations reduced modulo $p$, cf. \cite{SV1, SV2, VV}.

\smallskip

In Section \ref{sec 2} we collect useful facts. In Section \ref{sec 3} 
we classify all the cases in which $\bar S(a,b,c;l_1,l_2)$ is non-zero and evaluate it.

\smallskip
The author thanks A.\,Slinkin and V.Vologodsky for useful discussions.

\section{Preliminary remarks}
\label{sec 2}

\subsection{Lucas' Theorem}

\begin{thm} [\cite{L}] \label{thm L}
For non-negative integers $m$ and $n$ and a prime $p$,
 the following congruence relation holds:
\bean
\label{Lucas}
\binom{n}{m}\equiv \prod _{i=0}^a \binom{n_i}{m_i}\quad (\on{mod}\ p),
\eean
where $m=m_{b}p^{b}+m_{b-1}p^{b-1}+\cdots +m_{1}p+m_{0}$ 
and  $n=n_{b}p^{b}+n_{b-1}p^{b-1}+\cdots +n_{1}p+n_{0}$
are the base $p$ expansions of $m$ and $n$ respectively. This uses the convention that 
$\binom{n}{m}=0$ if $n<m$.
\qed
\end{thm}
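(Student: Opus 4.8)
The plan is to obtain the congruence by computing the coefficient of $x^m$ in $(1+x)^n$ two different ways inside the polynomial ring $\F_p[x]$. First I would record the characteristic-$p$ ``freshman's dream'': since $\binom pk\equiv0\pmod p$ for $0<k<p$, the binomial theorem gives $(1+x)^p\equiv1+x^p$ in $\F_p[x]$, and raising this congruence to the $p$-th power repeatedly yields $(1+x)^{p^i}\equiv1+x^{p^i}\pmod p$ for every $i\ge0$.

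Next, using the base-$p$ expansion $n=\sum_i n_i\>p^i$ together with the previous step, I would factor
\be
(1+x)^n=\prod_i\big((1+x)^{p^i}\big)^{n_i}\equiv\prod_i\big(1+x^{p^i}\big)^{n_i}\pmod p.
\ee
The coefficient of $x^m$ on the left is $\binom nm$. Expanding each factor on the right as $\sum_{k_i=0}^{n_i}\binom{n_i}{k_i}x^{k_i p^i}$, the coefficient of $x^m$ becomes $\sum\prod_i\binom{n_i}{k_i}$, where the sum runs over all digit choices $k_i$ satisfying $\sum_i k_i\>p^i=m$.

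The only step needing care — and hence the closest thing to an obstacle — is to check that this constraint forces $k_i=m_i$ for every $i$, where $m=\sum_i m_i\>p^i$ is the base-$p$ expansion of $m$. This is precisely uniqueness of base-$p$ representation: each digit obeys $0\le k_i\le n_i\le p-1$, so no carrying can occur and the sum collapses to the single term $\prod_i\binom{n_i}{m_i}$. When some $m_i>n_i$ the factor $\binom{n_i}{m_i}$ vanishes, in agreement with the stated convention $\binom{n_i}{m_i}=0$ and with the vanishing of $\binom nm\bmod p$. Equating the two expressions for the coefficient of $x^m$ then gives $\binom nm\equiv\prod_i\binom{n_i}{m_i}\pmod p$. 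Beyond the freshman's dream, everything is bookkeeping with base-$p$ digits, so I anticipate no real difficulty.
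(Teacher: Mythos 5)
Your proof is correct and complete. Note that the paper does not actually prove this theorem: it is stated as a citation of Lucas' original 1878 article \cite{L}, with the \qed\ marking the proof as omitted. Your argument is the standard generating-function proof --- reduce $(1+x)^n$ modulo $p$ via the freshman's dream $(1+x)^{p^i}\equiv 1+x^{p^i}$, factor according to the base-$p$ digits of $n$, and compare coefficients of $x^m$ --- and you correctly identify and discharge the one delicate point, namely that the constraint $\sum_i k_i\,p^i=m$ with $0\le k_i\le n_i\le p-1$ forces $k_i=m_i$ by uniqueness of base-$p$ representation, with the convention $\binom{n_i}{m_i}=0$ handling the case $m_i>n_i$. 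Nothing further is needed.
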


\subsection{Cancellation of factorials}

\begin{lem}
\label{lem ca}

If $a,b$ are non-negative integers and  $a+b=p-1$, then in $\F_p$ we have
\bean
\label{ca}
a!\,b!\,=\,(-1)^{a+1}\,.
\eean

\end{lem}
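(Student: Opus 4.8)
The statement to prove is Lemma~\ref{lem ca}: if $a,b$ are non-negative integers with $a+b=p-1$, then $a!\,b!=(-1)^{a+1}$ in $\F_p$.

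The plan is to reduce everything to Wilson's theorem, which states that $(p-1)!\equiv-1\pmod p$. Since $a+b=p-1$, I want to relate the product $a!\,b!$ to $(p-1)!$. The natural idea is to pair each factor of $b!$ with a residue class $-k$ for suitable $k$, thereby turning the factors $1,2,\dots,b$ into the ``missing'' upper factors $a+1,a+2,\dots,p-1$ of $(p-1)!$ up to sign.

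Here is the key computation I would carry out. In $\F_p$ we have, for each $k$ with $1\le k\le b$, the congruence $k\equiv -(p-k)\pmod p$, so
\beq
b!\ =\ \prod_{k=1}^{b} k\ \equiv\ \prod_{k=1}^{b}\bigl(-(p-k)\bigr)\ =\ (-1)^{b}\prod_{k=1}^{b}(p-k)\pmod p.
\eeq
As $k$ runs from $1$ to $b$, the quantity $p-k$ runs over $p-1,p-2,\dots,p-b=a+1$, that is, exactly over the integers from $a+1$ to $p-1$. Hence
\beq
b!\ \equiv\ (-1)^{b}\,\frac{(p-1)!}{a!}\pmod p.
\eeq
Multiplying both sides by $a!$ gives $a!\,b!\equiv(-1)^{b}(p-1)!$. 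By Wilson's theorem $(p-1)!\equiv-1$, so $a!\,b!\equiv(-1)^{b+1}$. Finally, since $a+b=p-1$ and $p$ is odd, $a+b$ is even, so $b$ and $a$ have the same parity; therefore $(-1)^{b+1}=(-1)^{a+1}$, which is the claimed formula.

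I do not anticipate a genuine obstacle here, as the argument is short and elementary; the only points requiring care are bookkeeping ones. First, I must confirm that the index substitution $k\mapsto p-k$ maps $\{1,\dots,b\}$ bijectively onto $\{a+1,\dots,p-1\}$, which uses precisely the hypothesis $a+b=p-1$ (so that $p-b=a+1$). Second, the final parity step relies on $p$ being odd, an assumption present throughout the paper, so that $a\equiv b\pmod 2$ and the exponents match. An alternative that avoids invoking Wilson's theorem as a black box would be to prove the statement by induction on $b$: the base case $b=0$ reads $(p-1)!=(-1)^{p}=-1$, which is itself Wilson, and the inductive step multiplies and divides by the appropriate residue; but the direct pairing argument above is cleaner and I would present that one.
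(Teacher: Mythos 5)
Your proof is correct and follows essentially the same route as the paper: rewrite one of the factorials via $k\equiv-(p-k)$ so that the two factorials together fill out $(p-1)!$, then apply Wilson's theorem. The only cosmetic difference is that you apply the pairing to $b!$ (getting $(-1)^{b+1}$ and then using $a\equiv b\pmod 2$ since $a+b=p-1$ is even), whereas the paper applies it to $a!$ and lands on $(-1)^{a+1}$ directly.
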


\begin{proof} We have $a!=(-1)^a(p-1)\dots(p-a)$ and $p-a= b+1$. Hence
$a!\,b! = (-1)^a(p-1)!= (-1)^{a+1}$ by Wilson's Theorem.
\end{proof}

\begin{lem}
\label{lem ga}
Let $a,b$ be positive integers such that $a<p$, $b<p$, $p\leq a+b$. Then we have an identity in $\F_p$,
\bean
\label{p-ga}
  b\,\binom{b-1}{a+b-p} 
  =
b\,\binom{b-1}{p-a-1}
=
 (-1)^{a+1}\,\frac{a! \,b!}
{(a+b-p)!}\,.
\eean
\qed

\end{lem}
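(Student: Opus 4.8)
The plan is to reduce the entire statement to Lemma \ref{lem ca} by elementary manipulation of binomial coefficients, after checking that every factorial appearing is a unit in $\F_p$. First I would dispose of the leftmost equality, which is purely combinatorial: since $(b-1)-(a+b-p)=p-a-1$, the symmetry $\binom{n}{k}=\binom{n}{n-k}$ gives $\binom{b-1}{a+b-p}=\binom{b-1}{p-a-1}$ as integers, hence also in $\F_p$.

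For the second equality I would expand the binomial coefficient directly. Writing $b\,\binom{b-1}{p-a-1}=\dfrac{b!}{(a+b-p)!\,(p-a-1)!}$, where I have used $(b-1)-(p-a-1)=a+b-p$, reduces the claim to the single identity $\dfrac{1}{(p-a-1)!}=(-1)^{a+1}\,a!$ in $\F_p$, equivalently $a!\,(p-a-1)!=(-1)^{a+1}$. But $a+(p-a-1)=p-1$, so this is precisely Lemma \ref{lem ca} applied to the pair $(a,\,p-a-1)$. Substituting back and multiplying by $\dfrac{b!}{(a+b-p)!}$ then yields the asserted value $(-1)^{a+1}\,\dfrac{a!\,b!}{(a+b-p)!}$, closing the chain.

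The only point requiring care—and the closest thing to an obstacle—is the legitimacy of these algebraic steps in $\F_p$: I must confirm that the factorials I cancel or invert are nonzero modulo $p$. The hypotheses $0<a,b<p$ and $p\le a+b$ guarantee $0\le a+b-p<p$ (since $a+b<2p$), $1\le p-a-1\le p-2$, and $1\le b<p$, so each of $b!$, $(a+b-p)!$, and $(p-a-1)!$ is a product of integers strictly between $0$ and $p$, hence a unit in $\F_p$. With these ranges verified, the cancellations and the inversion of $(p-a-1)!$ are valid, and the argument is complete.
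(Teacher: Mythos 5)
Your proof is correct and is exactly the argument the paper intends: the lemma is stated with no written proof, and Lemma \ref{lem ca} is placed immediately before it precisely so that one reduces $b\binom{b-1}{p-a-1}=\frac{b!}{(p-a-1)!\,(a+b-p)!}$ to the identity $a!\,(p-a-1)!=(-1)^{a+1}$. One tiny imprecision: when $a=p-1$ you get $p-a-1=0$ rather than $1\le p-a-1\le p-2$, but $0!=1$ is still a unit in $\F_p$, so nothing in your argument breaks.
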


\subsection{$\F_p$-Integrals}
\label{sec 3}

Let $M$ be an  $\F_p$-module. Let $P(x_1,\dots,x_k)$ be a polynomial
with coefficients in $M$,
\bean
\label{St}
P(x_1,\dots,x_k) = \sum_{d}\, c_d \,x_1^{d_1}\dots x_k^{d_k}.
\eean
Let $l=(l_1,\dots,l_k)\in \Z_{>0}^k$. We call the coefficient
$c_{l_1p-1,\dots,l_kp-1}$  the {\it $\F_p$-integral} of the polynomial $P$ over the cycle $[l_1,\dots,l_k]_p$
and denoted it by $\int_{[l_1,\dots,l_k]_p} P(x_1,\dots,x_k)\,dx_1\dots dx_k$.

\begin{lem}
\label{lem St}
For any $i=1,\dots,k$, we have
\bea
\int_{[l_1,\dots,l_k]_p} \frac{\der P}{\der x_i}(x_1,\dots,x_k) \,dx_1\dots dx_k= 0\,.
\eea
\qed
\end{lem}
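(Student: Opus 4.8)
The plan is to compute the relevant coefficient directly from the monomial expansion, since the $\F_p$-integral over $[l_1,\dots,l_k]_p$ is by definition nothing more than the extraction of a single coefficient. First I would take $P$ in the form \eqref{St}, namely $P=\sum_d c_d\,x_1^{d_1}\dots x_k^{d_k}$ with $c_d\in M$, and differentiate term by term with respect to $x_i$, obtaining
\[
\frac{\der P}{\der x_i}=\sum_{d\>:\>d_i\geq 1} c_d\,d_i\,x_1^{d_1}\dots x_i^{d_i-1}\dots x_k^{d_k}\,.
\]
Here the factor $d_i$ is understood as its image in $\F_p$ acting on the module element $c_d\in M$, and the terms with $d_i=0$ drop out.

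Next I would read off the coefficient of $x_1^{l_1p-1}\dots x_k^{l_kp-1}$ in this expansion. Since the reindexing $d\mapsto(d_1,\dots,d_i-1,\dots,d_k)$ is injective on the set $\{d:d_i\geq 1\}$, distinct monomials of $P$ produce distinct monomials of $\der P/\der x_i$, so at most one term can match the target exponent vector. A monomial matches exactly when $d_j=l_jp-1$ for every $j\neq i$ and $d_i-1=l_ip-1$, i.e. $d_i=l_ip$. Hence the $\F_p$-integral in question is $c_d\,(l_ip)$ for the single index $d=(l_1p-1,\dots,l_ip,\dots,l_kp-1)$.

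The last step is the elementary observation that the prefactor $l_ip$ is an integer divisible by $p$, so it vanishes in $\F_p$; because $M$ is an $\F_p$-module, multiplying $c_d\in M$ by any integer in $p\Z$ gives $0$, so $c_d\,(l_ip)=0$ and the integral is zero. There is no real obstacle in this lemma: the whole content is the single calculation above. The only points deserving care are, first, that differentiating $x_i^{d_i}$ at the critical exponent $d_i=l_ip$ manufactures exactly the factor $l_ip\equiv 0\pmod p$, and second, that this conclusion remains valid for coefficients in an arbitrary $\F_p$-module rather than in $\F_p$ itself. This is precisely the Stokes-like mechanism mentioned after Theorem \ref{thm nd}, which is what lets coefficient extraction at exponents congruent to $-1$ modulo $p$ play the role of integration over a cycle.
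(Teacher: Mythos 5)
Your proof is correct and is exactly the coefficient-extraction argument the paper has in mind: the target monomial in $\frac{\der P}{\der x_i}$ can only arise from the term of $P$ with $d_i=l_ip$, and the resulting factor $l_ip$ kills it in any $\F_p$-module. The paper simply states this lemma with no written proof (it is the Stokes-like observation already noted in the introduction), so your write-up supplies the intended, essentially unique argument.
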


\subsection{$\F_p$-Beta integral}
For non-negative integers $a, b$ the classical beta integral formula says
\bean
\label{bk}
\int_{0}^{1} x^{a}(1-x)^{b} dx =\frac{a!\,b!}{(a+b+1)!}\,.
\eean

\begin{lem} [\cite{V}]
\label{lem bfun}
Let
$0\leq a, b <p$, $p-1\leq a+b$. 
Then in $\F_p$ we have
\bean
\label{bf1}
\int_{[1]_p} x^a(1-x)^b dx\,
=  \, - \,\frac{a!\,b!}{(a+b-p+1)!}\,.
\eean
If $a+b<p-1$, then 
\bean
\label{bf2}
\int_{[1]_p} x^a(1-x)^b dx\,
=  0\,.
\eean

\end{lem}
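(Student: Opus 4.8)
The statement to prove is Lemma~\ref{lem bfun}, the $\F_p$-beta integral formula. My plan is to compute the coefficient of $x^{p-1}$ in $x^a(1-x)^b$ directly by expanding $(1-x)^b$ via the binomial theorem, and then invoke the earlier factorial lemmas to massage the resulting binomial coefficient into the stated factorial quotient.

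First I would expand $(1-x)^b = \sum_{k=0}^{b} \binom{b}{k}(-1)^k x^k$, so that $x^a(1-x)^b = \sum_{k=0}^{b}\binom{b}{k}(-1)^k x^{a+k}$. The $\F_p$-integral $\int_{[1]_p} x^a(1-x)^b\,dx$ is by definition the coefficient of $x^{p-1}$, which picks out the single term with $a+k = p-1$, i.e. $k = p-1-a$. This gives $\int_{[1]_p} x^a(1-x)^b\,dx = (-1)^{p-1-a}\binom{b}{p-1-a}$, provided that $0 \le p-1-a \le b$; otherwise the coefficient is zero. The condition $p-1-a \ge 0$ is $a \le p-1$, which holds, and $p-1-a \le b$ is exactly $p-1 \le a+b$. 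This immediately yields the second case \eqref{bf2}: when $a+b < p-1$ the index $p-1-a$ exceeds $b$, the binomial coefficient vanishes, and the integral is $0$.

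For the first case \eqref{bf1}, assuming $p-1 \le a+b$, I must show $(-1)^{p-1-a}\binom{b}{p-1-a} = -\,a!\,b!/(a+b-p+1)!$ in $\F_p$. Here I split into subcases. If $a+b = p-1$, then $p-1-a = b$, so $\binom{b}{b} = 1$ and the left side is $(-1)^{p-1-a} = (-1)^{b}$ (since $p$ is odd, $(-1)^{p-1} = 1$); meanwhile the right side is $-a!\,b!/0! = -a!\,b!$, and by Lemma~\ref{lem ca} we have $a!\,b! = (-1)^{a+1}$, so $-a!\,b! = (-1)^{a+2} = (-1)^a = (-1)^{b}$ using $a+b=p-1$ and $p$ odd. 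The two sides agree. If instead $p \le a+b$ (strict), then I would rewrite $\binom{b}{p-1-a}$ to match Lemma~\ref{lem ga}. Note $p-1-a$ may be negative when $a \ge p$, but here $a<p$ so it is the complementary index $\binom{b}{p-1-a} = \binom{b}{a+b-(p-1)} $ only after using symmetry carefully; more directly, since $0<b<p$ the factor $b$ is invertible, and Lemma~\ref{lem ga} gives $b\binom{b-1}{p-a-1} = (-1)^{a+1} a!\,b!/(a+b-p)!$. The main bookkeeping obstacle is converting my $\binom{b}{p-1-a}$ into $b\binom{b-1}{p-a-1}$ up to the right factor.

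The cleanest route through that obstacle is the absorption identity $\binom{b}{p-1-a} = \frac{b}{p-1-a}\binom{b-1}{p-2-a}$, but a simpler reindexing is preferable: I would instead write $b\binom{b-1}{p-a-1}$ and relate it to $(a+b-p+1)\binom{b}{a+b-p+1}$ via $k\binom{b}{k} = b\binom{b-1}{k-1}$ with $k = a+b-p+1$. Since $p-1-a$ and $a+b-p+1$ sum to $b$, symmetry of binomial coefficients gives $\binom{b}{p-1-a} = \binom{b}{a+b-p+1}$, and then Lemma~\ref{lem ga} (with its left-hand form $b\binom{b-1}{a+b-p}$) together with the sign $(-1)^{p-1-a} = (-1)^{-a} = (-1)^a$ (using $p$ odd so $(-1)^{p-1}=1$) should deliver $(-1)^a \cdot (-1)^{a+1} a!\,b!/(a+b-p)!$ divided by the appropriate linear factor to produce the denominator $(a+b-p+1)!$. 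Tracking the single sign $(-1)^a\cdot(-1)^{a+1} = -1$ yields the leading minus sign in \eqref{bf1}, which is the qualitative feature distinguishing the $\F_p$-integral from the classical formula \eqref{bk}. The main work is thus purely the careful sign and index accounting; no deep idea beyond Lucas/Wilson (already packaged in Lemmas~\ref{lem ca} and~\ref{lem ga}) is required.
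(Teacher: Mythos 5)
Your proof is correct and follows essentially the same route as the paper: expand $(1-x)^b$ binomially, extract the coefficient of $x^{p-1}$ to get $(-1)^{p-1-a}\binom{b}{p-1-a}$, and convert this to the stated factorial form via Lemma~\ref{lem ga}. The only substantive difference is that you treat the boundary case $a+b=p-1$ separately using Lemma~\ref{lem ca} (where Lemma~\ref{lem ga} does not literally apply, since it assumes $p\leq a+b$), a detail the paper's one-line invocation of Lemma~\ref{lem ga} glosses over.
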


\begin{proof}
We have
$
x^a(1-x)^b = \sum_{k=0}^b(-1)^k \binom{b}{k} x^{k}\,,
$
and need  $a+k=p-1$. Hence $k=p-1-a$ and 
\bea
\int_{[1]_p} x^a(1-x)^bdx \,
= (-1)^{p-1-a}\binom{b}{p-1-a}.
\eea
Now Lemma \ref{lem ga} implies \eqref{bf1}. Formula \eqref{bf2} is clear.
\end{proof}

\subsection{Morris' identity}

Suppose that $\al,\beta,\ga$ are non-negative integers. Then
\bean
\label{Mid}
&&
\on{CT}\,\prod_{i=1}^n (1-x_i)^\al (1-1/x_i)^\beta \prod_{1\leq j\ne k\leq n} (1-x_j/x_k)^\ga
\\
\notag
&&
\phantom{a}
=\, \ \prod_{j=1}^n \frac{(j\ga)!}{\ga!}\,
\frac{(\al+\beta +(j-1)\ga)!}
{(\al+(j-1)\ga)!\,(\beta +(j-1)\ga)!}\,,
\eean
 where CT denotes the constant term.
 Morris identity was deduced in \cite{Mo} from the integral formula 
for the classical Selberg integral, see \cite[Section 8.8]{AAR}.

The left-hand side of  \eqref{Mid} can be written as 
\bean
\label{Mim}
\on{CT}\,(-1)^{\binom{n}{2}\ga + n\beta}
\prod_{1\leq i<j\leq n}(x_i-x_j)^{2\ga}
\prod_{i=1}^n x_i^{-\beta-(n-1)\ga}(1-x_i)^{\al+\beta}\,.
\eean

\section{$2D$ $\F_p$-Selberg integrals}
\label{sec 3}

\subsection{Definition}

For integers $a,b,c$, 
\bean
\label{less p}
0 \ <\  a,b,c  \ <\ p,
\eean
introduce the master polynomial
\bea
\Phi (x_1,x_2; a,b,c)
&=&
(x_1-x_2)^{2c} \prod_{i=1}^2x_i^a (1-x_i)^b 
\eea
as a polynomial in $\Z[x_1,x_2]$. 
For positive integers $l_1,l_2$, denote by $S(a,b,c;l_1,l_2) $ the coefficient of $x_1^{l_1p-1}x_2^{l_2p-1}$ 
in $\Phi (x_1,x_2; a,b,c)$. Denote by $\bar S(a,b,c;l_1,l_2) $ the projection of
\\
 $S(a,b,c;l_1,l_2) $ to $\F_p$.

We have in $\F_p$,
\bea
\bar S(a,b,c;l_1,l_2) 
&=& 
\int_{[l_1,l_2]_p} \Phi (x_1,x_2;a,b,c)dx_1dx_2 \,.
\eea
The element $\bar S(a,b,c;l_1,l_2) $ is
called 
 a two-dimensional {\it $\F_p$-Selberg integral.}

\vsk.2>

We have $S(a,b,c;l_1,l_2) = S(a,b,c;l_2,l_1)$ 
since $\Phi (x_1,x_2; a,b,c)= \Phi (x_2,x_1; a,b,c)$.

\subsection{Recursion}  
Denote
\bea
\bar S_1(a,b,c;l_1,l_2) 
&=&
 \int_{[l_1,l_2]_p} (x_1+x_2)\Phi(x;a,b,c) dx_1dx_2\,,
\\
\bar S_2(a,b,c;1,2) 
&=&
 \int_{[l_1,l_2]_p} ((1-x_1)+(1-x_2))\Phi(x;a,b,c) dx_1dx_2\,.
\eea
These are elements of $\F_p$.
 
 \begin{thm}
 \label{thm Ao}
 We have 
 \bean
\label{Ao1}
(a+1) \,\bar S_1(a,b,c;l_1,l_2) 
&=&
2(a+b+c+2)\,
\bar S(a+1,b,c;l_1,l_2),
\\
\label{Ao2}
2(a+c+1) \,\bar S(a,b,c;l_1,l_2) 
&=&
(a+b+2c+2)\,
\bar S_1(a,b,c;l_1,l_2),
\eean
 \bean
\label{Ao3}
(b+1) \,\bar S_2(a,b,c;l_1,l_2) 
&=&
2(a+b+c+2)\,
\bar S(a,b+1,c;l_1,l_2),
\\
\label{Ao4}
2(b+c+1) \,\bar S(a,b,c;l_1,l_2) 
&=&
(a+b+2c+2)\,
\bar S_2(a,b,c;l_1,l_2).
\eean

  \end{thm}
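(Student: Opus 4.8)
The plan is to obtain all four identities from the single ``integration by parts'' principle of Lemma~\ref{lem St}: the $\F_p$-integral of any partial derivative is zero. For a suitable polynomial $Q\in\Z[x_1,x_2]$ I will compute the symmetric combination $\der_1(Q\,\Phi)+\der_2(\tilde Q\,\Phi)$, where $\tilde Q$ is $Q$ with $x_1$ and $x_2$ interchanged, arrange that this combination is again a polynomial multiple of $\Phi$, and then integrate over $[l_1,l_2]_p$: the left-hand side contributes nothing, and the right-hand side becomes a linear relation among the $\bar S$'s.

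The computation rests on the logarithmic derivatives
\be
\frac{\der_1\Phi}{\Phi}=\frac{2c}{x_1-x_2}+\frac a{x_1}-\frac b{1-x_1},\qquad
\frac{\der_2\Phi}{\Phi}=\frac{-2c}{x_1-x_2}+\frac a{x_2}-\frac b{1-x_2}.
\ee
In every case I take $Q=x_1(1-x_1)\,R(x_2)$ with $R\in\{1,\ x_2,\ 1-x_2\}$; the factor $x_1(1-x_1)$ clears the poles of $\der_1\Phi/\Phi$ at $x_1=0,1$, while the choice of $R$ selects the target relation. The only genuinely non-polynomial contribution comes from the two terms carrying $2c/(x_1-x_2)$, and these must combine into a polynomial; this pole cancellation is the one point to verify, and I expect it to be the main (if routine) obstacle. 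It follows because in each case the resulting numerator is divisible by $x_1-x_2$, the substantial instance being $x_1(1-x_1)-x_2(1-x_2)=(x_1-x_2)(1-x_1-x_2)$.

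Carrying this out gives three explicit polynomial identities. For \eqref{Ao2} I take $R=1$ and obtain
\be
\der_1\big(x_1(1-x_1)\Phi\big)+\der_2\big(x_2(1-x_2)\Phi\big)=2(a+c+1)\,\Phi-(a+b+2c+2)(x_1+x_2)\,\Phi ,
\ee
so that integrating over $[l_1,l_2]_p$ yields \eqref{Ao2}. For \eqref{Ao1} I take $R=x_2$, getting
\be
\der_1\big(x_1(1-x_1)x_2\,\Phi\big)+\der_2\big(x_2(1-x_2)x_1\,\Phi\big)=(a+1)(x_1+x_2)\,\Phi-2(a+b+c+2)\,x_1x_2\,\Phi ;
\ee
since $x_1x_2\,\Phi=\Phi(x_1,x_2;a+1,b,c)$, integration yields \eqref{Ao1}. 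For \eqref{Ao3} I take $R=1-x_2$ and use $(1-x_1)(1-x_2)\,\Phi=\Phi(x_1,x_2;a,b+1,c)$, which produces $(b+1)\bar S_2=2(a+b+c+2)\bar S(a,b+1,c)$.

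Finally, \eqref{Ao4} need not be derived separately. Because $(x_1+x_2)+\big((1-x_1)+(1-x_2)\big)=2$, the definitions give $\bar S_1(a,b,c;l_1,l_2)+\bar S_2(a,b,c;l_1,l_2)=2\,\bar S(a,b,c;l_1,l_2)$; substituting $\bar S_1=2\bar S-\bar S_2$ into \eqref{Ao2} and simplifying $2(a+c+1)-2(a+b+2c+2)=-2(b+c+1)$ gives \eqref{Ao4}. All the displayed identities are equalities in $\Z[x_1,x_2]$ (here $c\ge1$ keeps $(x_1-x_2)^{2c-1}$ a polynomial), so one computes over $\Z$ and reduces modulo $p$ only at the end.
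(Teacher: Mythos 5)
Your proof is correct and follows essentially the same route as the paper: both integrate $\der_1\big(x_1(1-x_1)R(x_2)\Phi\big)+\der_2\big(x_2(1-x_2)R(x_1)\Phi\big)$ over $[l_1,l_2]_p$ and use the vanishing of $\F_p$-integrals of derivatives, with your choices $R=x_2$ and $R=1$ matching the paper's for \eqref{Ao1} and \eqref{Ao2}. The only (harmless) deviation is that you obtain \eqref{Ao4} algebraically from \eqref{Ao2} via $\bar S_1+\bar S_2=2\,\bar S$ rather than by a fourth integration by parts, a step the paper only sketches as ``similar'' anyway.
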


\begin{proof}

The proof is similar to the proofs in \cite[Section 4.4]{RV1}. 
Adding the equations
 \bea
0
&=&
\int_{[l_1,l_2]_p}\frac{\der}{\der x_1}\big[(1-x_1)x_1x_2 \Phi\big] dx_1dx_2
\\
&=&
\int_{[l_1,l_2]_p} \Phi \Big[-(b+1) x_1x_2 + (a+1) (1-x_1)x_2 + 2c\frac{x_1x_2(1-x_1)}{x_1-x_2}\Big]dx_1dx_2\,,
\\
0
&=&
\int_{[l_1,l_2]_p}\frac{\der}{\der x_2}\big[(1-x_2)x_1x_2 \Phi\big] dx_1dx_2
\\
&=&
\int_{[l_1,l_2]_p} \Phi \Big[-(b+1) x_1x_2 + (a+1) (1-x_2)x_1 + 2c\frac{x_1x_2(1-x_2)}{x_2-x_1}\Big]dx_1dx_2\,
\eea
 we obtain \eqref{Ao1}.   Adding the equations
\bea
0
&=&
\int_{[l_1,l_2]_p}\frac{\der}{\der x_1}\big[(1-x_1)x_1 \Phi\big] dx_1dx_2
\\
&=&
\int_{[l_1,l_2]_p} \Phi \Big[-(b+1) x_1 + (a+1) (1-x_1) + 2c\frac{x_1(1-x_1)}{x_1-x_2}\Big]dx_1dx_2\,,
\\
0
&=&
\int_{[l_1,l_2]_p}\frac{\der}{\der x_2}\big[(1-x_2)x_2 \Phi\big] dx_1dx_2
\\
&=&
\int_{[l_1,l_2]_p} \Phi \Big[-(b+1) x_2 + (a+1) (1-x_2) + 2c\frac{x_2(1-x_2)}{x_2-x_1}\Big]dx_1dx_2\,
\eea
we obtain \eqref{Ao2}. Equations \eqref{Ao3} and \eqref{Ao4} are proved similarly.
\end{proof}

\begin{cor} We have
\bean
\label{Ar1}
\bar S(a,b,c;l_1,l_2) &=& \bar S(a-1,b,c;l_1,l_2)
\,\frac{a\,(a+c)}{(a+b+c+1)\,(a+b+2c+1)},
\eean
if the denominator is non-zero, and
\bean
\label{Ar2}
\bar S(a,b,c;l_1,l_2) &=& \bar S(a,b-1,c;l_1,l_2)
\,\frac{b\,(b+c)}{(a+b+c+1)\,(a+b+2c+1)},
\eean
if the denominator is non-zero.
\qed
\end{cor}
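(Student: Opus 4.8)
The plan is to derive both relations of the corollary purely formally from Theorem \ref{thm Ao}, by eliminating the auxiliary integrals $\bar S_1$ and $\bar S_2$. For \eqref{Ar1} I would use only the pair \eqref{Ao1}, \eqref{Ao2}, each with $a$ replaced by $a-1$, which read
\bea
a\,\bar S_1(a-1,b,c;l_1,l_2) &=& 2(a+b+c+1)\,\bar S(a,b,c;l_1,l_2),
\\
2(a+c)\,\bar S(a-1,b,c;l_1,l_2) &=& (a+b+2c+1)\,\bar S_1(a-1,b,c;l_1,l_2).
\eea
Multiplying the second equation by $a$ and substituting the first eliminates $\bar S_1(a-1,b,c;l_1,l_2)$ and yields, after cancelling the common factor $2$ (legitimate because $p$ is odd, so $2$ is invertible in $\F_p$), the identity
\bea
(a+b+c+1)(a+b+2c+1)\,\bar S(a,b,c;l_1,l_2) &=& a\,(a+c)\,\bar S(a-1,b,c;l_1,l_2).
\eea
This identity holds in $\F_p$ unconditionally; when the coefficient $(a+b+c+1)(a+b+2c+1)$ on the left is nonzero in $\F_p$ one divides by it to obtain \eqref{Ar1}.

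For \eqref{Ar2} I would run the identical elimination on the pair \eqref{Ao3}, \eqref{Ao4} with $b$ replaced by $b-1$, eliminating $\bar S_2(a,b-1,c;l_1,l_2)$. By the manifest $a\leftrightarrow b$ symmetry of the four equations in Theorem \ref{thm Ao} this produces the identity $(a+b+c+1)(a+b+2c+1)\,\bar S(a,b,c;l_1,l_2) = b\,(b+c)\,\bar S(a,b-1,c;l_1,l_2)$, and dividing when the denominator is nonzero gives \eqref{Ar2}.

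There is no real obstacle here: the corollary is a formal consequence of Theorem \ref{thm Ao}, and the computation is a two-term linear elimination. The only points requiring a little care are the bookkeeping of the index shift $a\mapsto a-1$ (respectively $b\mapsto b-1$) in the quoted equations, and the observation that dividing by $2$ is permitted precisely because $p$ is odd. The hypothesis \emph{``if the denominator is non-zero''} is exactly the condition needed to pass from the unconditional polynomial identity in $\F_p$ to the displayed quotient.
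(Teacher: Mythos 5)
Your proof is correct and coincides with what the paper intends: the corollary carries a \qed with no written argument precisely because it is the two-line elimination of $\bar S_1$ (resp.\ $\bar S_2$) from the pairs \eqref{Ao1}--\eqref{Ao2} (resp.\ \eqref{Ao3}--\eqref{Ao4}) after the shift $a\mapsto a-1$ (resp.\ $b\mapsto b-1$), exactly as you carry it out. Your remarks on the invertibility of $2$ and on the role of the nonvanishing-denominator hypothesis are accurate.
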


\subsection{$p$-cycle $[1,1]_p$}

In this section we evaluate $\bar S(a,b,c;1,1)$.

\begin{lem}
\label{lem i11}
We have $ S(a,b,c;1,1)=0$ if at least one of the following inequalities holds:
\bean
\label{11 ine}
p\leq a+c,  \qquad a+b+c \leq p-2.
\eean

\end{lem}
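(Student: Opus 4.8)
The goal is to show that the integer coefficient $S(a,b,c;1,1)$, namely the coefficient of $x_1^{p-1}x_2^{p-1}$ in $\Phi(x_1,x_2;a,b,c)=(x_1-x_2)^{2c}\prod_{i=1}^2 x_i^a(1-x_i)^b$, vanishes whenever $p\leq a+c$ or $a+b+c\leq p-2$. The plan is to expand the two factors of the master polynomial explicitly and extract the coefficient as a finite sum, then bound the exponents to force every term to vanish. Writing $(x_1-x_2)^{2c}=\sum_{k=0}^{2c}\binom{2c}{k}(-1)^k x_1^{2c-k}x_2^{k}$ and $x_i^a(1-x_i)^b=\sum_{m_i=0}^b\binom{b}{m_i}(-1)^{m_i}x_i^{a+m_i}$, the coefficient of $x_1^{p-1}x_2^{p-1}$ is obtained by collecting all terms whose total $x_1$-degree equals $p-1$ and total $x_2$-degree equals $p-1$. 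Thus $S(a,b,c;1,1)=\sum \binom{2c}{k}\binom{b}{m_1}\binom{b}{m_2}(-1)^{k+m_1+m_2}$ over those $(k,m_1,m_2)$ with $2c-k+a+m_1=p-1$ and $k+a+m_2=p-1$.

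First I would handle the range constraint coming from the degree equations. From $k+a+m_2=p-1$ with $m_2\geq 0$ and $k\geq 0$ we get $a+k\leq p-1$, and similarly from the $x_1$-equation $2c-k+a+m_1=p-1$ we get $a+2c-k\leq p-1$. For the case $p\leq a+c$: if $a+c\geq p$ then I expect the two inequalities $a+k\leq p-1$ and $a+2c-k\leq p-1$ to be jointly unsatisfiable, since adding them gives $2a+2c\leq 2p-2$, i.e. $a+c\leq p-1$, contradicting $a+c\geq p$. Hence there are no admissible index triples at all, so the sum is empty and $S(a,b,c;1,1)=0$ as an integer (not merely mod $p$). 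This is the clean half.

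For the case $a+b+c\leq p-2$, the argument is a total-degree count. The monomial $x_1^{p-1}x_2^{p-1}$ has total degree $2p-2$, whereas $\Phi$ has total degree exactly $2c+2(a+b)=2(a+b+c)$. Under the hypothesis $a+b+c\leq p-2$ we have $2(a+b+c)\leq 2p-4<2p-2$, so $\Phi$ contains no monomial of total degree $2p-2$; in particular the coefficient of $x_1^{p-1}x_2^{p-1}$ is zero. Again this holds at the level of integers.

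The main thing to be careful about is simply making the degree bookkeeping airtight — in particular that the two per-variable inequalities $a+k\le p-1$ and $a+2c-k\le p-1$ really do follow from nonnegativity of the other exponents $m_1,m_2$, and that their sum yields the sharp contradiction with $a+c\ge p$. There is no deep obstacle here: both parts are elementary exponent counts, and neither invokes $\F_p$ arithmetic beyond the final projection. I would present the two cases as two short paragraphs, concluding in each that $S(a,b,c;1,1)=0$ and hence $\bar S(a,b,c;1,1)=0$.
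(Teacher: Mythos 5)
Your proof is correct and follows essentially the same route as the paper: your observation that $a+k\le p-1$ and $a+2c-k\le p-1$ cannot both hold when $a+c\ge p$ is just the additive form of the paper's remark that every monomial of $(x_1-x_2)^{2c}x_1^ax_2^a$ has $\max(d_1,d_2)\ge a+c\ge p$, and the second case is the same total-degree count the paper leaves implicit. Nothing further is needed.
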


\begin{proof}

If $p\leq a+c$, then for every monomial $x_1^{d_1}x_2^{d_2}$ of $(x_1-x_2)^{2c}x_1^{a}x_2^{a}$ we have
$\max(d_1,d_2)\geq p$, and the monomial $x_1^{p-1}x_2^{p-1}$ does not enter the master polynomial.

Similarly, if $a+b+c\leq  p-2$, then the monomial $x_1^{p-1}x_2^{p-1}$ does not enter the master polynomial. 
\end{proof}

\begin{thm}
\label{thm 2d1}

Assume that $a,b,c$ satisfy \eqref{less p} and the  system of inequalities
\bean
\label{11 inee}
a+c\leq p-1,  \qquad a+b+c \geq p-1.
\eean
 Then the following statements hold true.
\begin{enumerate}
\item[(i)] If  $b+c \leq p-1$, then 
\bean
\label{11}
\bar S(a,b,c;1,1) = \frac{(2c)!}{c!}\,\frac{a!\,(a+c)!\,b!\,(b+c)!}{(a+b+c-p+1)!\,(a+b+2c-p+1)!}\,.
\eean
This expression is non-zero if and only if $2c<p$.

\item[(ii)] If $b+c \geq p$ and $a+b+2c\geq 2p-1$, then 
\bean
\label{12}
\bar S(a,b,c;1,1) = \frac{(2c)!}{c!}\,\frac{a!\,(a+c)!\,b!\,(b+c-p)!}{(a+b+c-p+1)!\,(a+b+2c-2p+1)!}\,.
\eean
This expression is non-zero if and only if $2c<p$.

\item[(iii)]   If $b+c \geq p$ and $a+b+2c\leq  2p-2$, then $\bar S(a,b,c;1,1)=0$.

\end{enumerate}

\end{thm}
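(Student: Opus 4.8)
The plan is to obtain an \emph{exact integer} evaluation of $S(a,b,c;1,1)$ from Morris' identity and then reduce it modulo $p$, letting the three cases fall out from the $p$-adic valuations of a few factorials. First I would rewrite the coefficient of $x_1^{p-1}x_2^{p-1}$ in $\Phi$ as the constant term of $(x_1-x_2)^{2c}x_1^{a-p+1}x_2^{a-p+1}(1-x_1)^b(1-x_2)^b$, and match this with the $n=2$ instance of the reformulated Morris identity \eqref{Mim} by setting $\gamma=c$, $\beta=p-1-a-c$, $\alpha=a+b+c-p+1$. These parameters are non-negative precisely when $a+c\le p-1$ and $a+b+c\ge p-1$, i.e.\ exactly under the hypotheses \eqref{11 inee}, and they satisfy $\alpha+\beta=b$. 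Then \eqref{Mid} gives the closed form, valid as an identity in $\Z$,
\[
S(a,b,c;1,1)=(-1)^c\,\frac{(2c)!}{c!}\,\frac{b!\,(b+c)!}{(a+b+c-p+1)!\,(a+b+2c-p+1)!\,(p-1-a-c)!\,(p-1-a)!}\,.
\]

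Next I would reduce this expression in $\F_p$. The two reflected factorials are disposed of once and for all by Wilson's theorem in the form of Lemma \ref{lem ca}: since $(a+c)+(p-1-a-c)=p-1$ and $a+(p-1-a)=p-1$, one gets $(p-1-a-c)!\equiv(-1)^{a+c+1}/(a+c)!$ and $(p-1-a)!\equiv(-1)^{a+1}/a!$, and the resulting sign combines with the $(-1)^c$ above to give $+1$. All remaining factorials have argument in $[0,p-1]$ except possibly $(b+c)!$ and $(a+b+2c-p+1)!$; because $b,c<p$ their arguments lie strictly below $2p$, so each carries \emph{at most one} factor of $p$. The three cases are now the three valuation patterns. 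In case (i) one has $b+c\le p-1$, hence automatically $a+b+2c\le 2p-2$, so no factor of $p$ occurs and the reduction returns \eqref{11} verbatim. In case (iii) the factor $b+c\ge p$ puts one $p$ in the numerator while the denominator (all of whose arguments are $\le p-1$ here) contributes none, so $\bar S(a,b,c;1,1)=0$. In case (ii) both $(b+c)!$ and $(a+b+2c-p+1)!$ carry one factor of $p$; using $(b+c)!/p\equiv-(b+c-p)!$ and $(a+b+2c-p+1)!/p\equiv-(a+b+2c-2p+1)!$ (again from Wilson), these cancel and the reduction yields \eqref{12}. The non-vanishing claims in (i) and (ii) then follow because every surviving factorial argument lies in $[0,p-1]$ and is invertible, so the value vanishes iff the single factor $(2c)!/c!$ does, i.e.\ iff $2c\ge p$.

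I expect the main obstacle to be the bookkeeping in the mod-$p$ reduction rather than any conceptual point: one must check that each of $(b+c)!$ and $(a+b+2c-p+1)!$ contains \emph{exactly} one factor of $p$ in the relevant regime (this is where $a+c\le p-1$ and $b,c<p$ are used, to confine the arguments to $[p,2p-2]$), and one must track the Wilson signs so that the prefactor $(-1)^c$ is absorbed cleanly. A more elementary alternative---expanding $(x_1-x_2)^{2c}$ binomially and factoring the $\F_p$-integral into a product of two one-dimensional $\F_p$-beta integrals via Lemma \ref{lem bfun}---reduces $\bar S(a,b,c;1,1)$ to the single sum $\sum_k(-1)^k\binom{2c}{k}\binom{b}{p-1-a-2c+k}\binom{b}{p-1-a-k}$, but this is a non-Saalschützian ${}_3F_2(1)$ with no elementary closed form over $\Z$, so one is forced back into mod-$p$ analysis; this is why routing through Morris' identity is the cleaner path. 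Note finally that case (i) is also delivered directly by Theorem \ref{thm nd}, whose hypotheses \eqref{abc n eq} read $a+b+c\ge p-1$ and $a+b+2c\le 2p-2$ and are implied by $a+c\le p-1$, $b+c\le p-1$, $a+b+c\ge p-1$; the same theorem covers case (iii), where its formula vanishes because $(b+c)!\equiv0$.
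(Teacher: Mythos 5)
Your proposal is correct and follows essentially the same route as the paper: both evaluate $S(a,b,c;1,1)$ exactly over $\Z$ via the $n=2$ Morris identity with $\gamma=c$, $\beta=p-1-a-c$, $\alpha=a+b+c+1-p$, then reduce modulo $p$ with Wilson's theorem, splitting into the three cases according to whether $(b+c)!$ and $(a+b+2c-p+1)!$ each carry exactly one factor of $p$. The paper's proof matches yours step for step, including the observation that case (i) is the $n=2$ instance of Theorem \ref{thm nd}.
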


\begin{rem}

Part (i) of Theorem \ref{thm 2d1} is a particular case of \cite[Theorem 4.1]{RV1} for $n=2$.

\end{rem}

\begin{proof} 

Proof of part (i).
We have 
\bean
\label{SnM}
S(a,b,c;1,1) \,=\,
\on{CT}\,
(x_1-x_2)^{2c}
\prod_{i=1}^2 x_i^{a+1-p}(1-x_i)^{b}\,.
\eean
This is the constant term for Morris's identity with 
\bea
\al =a+b+c+1-p, \quad \bt = p-1-a-c, \quad \ga=c.
\eea
By assumptions, these integers are non-negative, and Morris' identity can be applied to evaluate
\eqref{SnM}. The identity gives
\bean
\label{SnMM}
&&
S(a,b,c;1,1) \,=\, (-1)^{c}\,\frac{(2c)!}{c!}\,
 \\
 \notag
&&
\phantom{aaa}
\times\,
\frac{b!\,(b +c)!}
{(p-1-a)!\, (p-1-a-c)!\,(a+b  +c -p+1)!\,(a+b+2c-p+1)!}\,.
\eean
This is an element of  $\Z$.

\vsk.2>

If $b+c\leq p-1$, then
$a+b+2c +1-p\leq p-1$. In this case, all factorials in \eqref{SnMM} except $(2c)!$
are factorials of non-negative integers which are less than $p$.  We have the following identity in $\F_p$\,:
\bea
&&
(-1)^{c}\,\frac{(2c)!}{c!}\,
\frac{b!\,(b +c)!}
{(p-1-a)!\, (p-1-a-c)!\,(a+b  +c -p+1)!\,(a+b+2c-p+1)!}\,
\\
&&
\quad
=
\frac{(2c)!}{c!}\,\frac{a!\,(a+c)!\,b!\,(b+c)!}{(a+b+c-p+1)!\,(a+b+2c-p+1)!}\,,
\eea
which is obtained by using the identities $a!(p-1-a)!=(-1)^{a+1}$ and
$(a+c)!(p-1-a-c)!=(-1)^{a + c+1}$. This proves part (i).

\vsk.2>
Proof of part (ii).
If $b+c \geq p$ and $a+b+2c\geq 2p-1$, then $(b+c)!$ has exactly one factor $p$ and
$(a+b+2c-p+1)!$ has exactly  one factor $p$. Canceling these factors and using Wilson's theorem,
we obtain the following identity in $\F_p$,
\bea
&&
(-1)^{c}\,\frac{(2c)!}{c!}\,
\frac{b!\,(b +c)!}
{(p-1-a)!\, (p-1-a-c)!\,(a+b  +c -p+1)!\,(a+b+2c-p+1)!}\,
\\
&&
\quad
=
\frac{(2c)!}{c!}\,\frac{a!\,(a+c)!\,b!\,(b+c-p)!}{(a+b+c-p+1)!\,(a+b+2c-2p+1)!}\,.
\eea
This proves part (ii).
\vsk.2>

Proof of part (iii).
If $b+c \geq p$ and $a+b+2c\leq  2p-2$, then $(b+c)!$ has exactly one factor $p$ while the other factorials in 
\eqref{SnMM} except $(2c)!$ are not divisible by $p$. This implies that $\bar S(a,b,c;1,1)=0$. The theorem is proved.
\end{proof}

\subsection{$p$-cycle $[2,2]_p$}

In this section we evaluate $\bar S(a,b,c;2,2)$.

\begin{lem}
\label{lem i22}
We have $ S(a,b,c;2,2)=0$, if at least one of the following three inequalities holds:
\bean
\label{11 ine}
a+b+c \leq 2p-2, \qquad a+c\leq p-1, \qquad
b+c\leq p-1.
\eean

\end{lem}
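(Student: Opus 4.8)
The plan is to reduce all three cases to a single total-degree count on the master polynomial. First I would record the total degree of $\Phi(x_1,x_2;a,b,c)$: the factor $(x_1-x_2)^{2c}$ is homogeneous of degree $2c$, the factor $x_1^ax_2^a$ has degree $2a$, and each monomial of $(1-x_1)^b(1-x_2)^b$ has degree at most $2b$. Hence every monomial of $\Phi$ has total degree at most $2(a+b+c)$.

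The target monomial $x_1^{2p-1}x_2^{2p-1}$ has total degree $4p-2$. Therefore, if $2(a+b+c)<4p-2$, equivalently $a+b+c\le 2p-2$, then the coefficient $S(a,b,c;2,2)$ of a monomial of degree $4p-2$ must vanish. This disposes of the first inequality immediately.

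For the remaining two inequalities I would invoke the standing assumption \eqref{less p}, which forces $a,b,c\le p-1$ (they are positive integers strictly less than $p$). If $a+c\le p-1$, then $a+b+c=(a+c)+b\le (p-1)+(p-1)=2p-2$; symmetrically, if $b+c\le p-1$, then $a+b+c=(b+c)+a\le 2p-2$. In either case the first inequality holds, and the total-degree count above yields $S(a,b,c;2,2)=0$.

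There is no serious obstacle here; the only point worth isolating is the observation that, because $b\le p-1$ (respectively $a\le p-1$), each of the latter two hypotheses already implies $a+b+c\le 2p-2$, so the three cases collapse to one. This is the $[2,2]_p$ analogue of the total-degree vanishing appearing in Lemma \ref{lem i11}; the difference is that for the cycle $[2,2]_p$ all three listed inequalities express ``total degree too small'' rather than ``one variable degree too large,'' reflecting that the target exponents $2p-1$ are large and the difficulty lies in reaching them rather than in exceeding them.
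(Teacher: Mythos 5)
Your proof is correct and follows essentially the same route as the paper: the first inequality is handled by comparing the total degree $2(a+b+c)$ of $\Phi$ with the degree $4p-2$ of the target monomial, and the other two inequalities are reduced to the first using $a,b\le p-1$ from \eqref{less p}. Nothing further is needed.
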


\begin{proof}

If $a+b+c\leq 2p-2$, then the monomial $x_1^{2p-1}x_2^{2p-1}$ does not enter the master polynomial and hence  $ S(a,b,c;2,2)=0$.

If $a+c\leq p-1$ or $b+c\leq p-1$, then $a+b+c\leq  2p-2$, and hence  $ S(a,b,c;2,2)=0$.
\end{proof}

\begin{thm}
\label{thm 2d2}

Assume that $a,b,c$ satisfy \eqref{less p} and the  inequality
\bean
\label{22 inee}
 a+b+c \geq 2p-1.
\eean
 Then the following statements hold true.
\begin{enumerate}
\item[(i)] 

If $a+b+2c \leq 3p-2$, 
then 
\bean
\label{22-}
\bar S(a,b,c;2, 2) = - \frac{(2c)!}{c!}\,\frac{a!\,(a+c-p)!\,b!\,(b+c-p)!}{(a+b+c-2p+1)!\,(a+b+2c-2p+1)!}\,.
\eean
This expression is non-zero if and only if   $2c<p$.

\item[(ii)]  
If $3p-1\leq a+b+2c$, 
then $2c>p$ and
\bean
\label{22+}
S(a,b,c; 2,2) = - \frac{(2c-p)!}{c!}\,\frac{a!\,(a+c-p)!\,b!\,(b+c-p)!}{(a+b+c-2p+1)!\,(a+b+2c-3p+1)!}\,.
\eean
This expression is non-zero.

\end{enumerate}

\end{thm}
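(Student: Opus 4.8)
The plan is to mirror the proof of Theorem~\ref{thm 2d1}, replacing the shift by $p$ in the exponents with a shift by $2p$. First I would write the coefficient of $x_1^{2p-1}x_2^{2p-1}$ as a constant term,
\be
S(a,b,c;2,2)=\on{CT}\,(x_1-x_2)^{2c}\prod_{i=1}^2 x_i^{a+1-2p}(1-x_i)^b,
\ee
and identify the right-hand side, through the rewriting \eqref{Mim}, with Morris' identity \eqref{Mid} for $n=2$ and parameters $\ga=c$, $\bt=2p-1-a-c$, $\al=a+b+c+1-2p$. The only requirement for applying \eqref{Mid} is that $\al,\bt,\ga$ be non-negative integers: here $\ga=c>0$, while $\bt\ge1$ since $a+c\le2p-2$, and $\al\ge0$ is precisely the hypothesis \eqref{22 inee}. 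Evaluating Morris' identity then produces the integer expression
\be
S(a,b,c;2,2)=(-1)^c\,\frac{(2c)!}{c!}\,\frac{b!\,(b+c)!}{(2p-1-a)!\,(2p-1-a-c)!\,(a+b+c-2p+1)!\,(a+b+2c-2p+1)!}\,,
\ee
the exact analogue of \eqref{SnMM}, now valid as an identity in $\Z$.

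Next I would carry out a $p$-divisibility count. Because $a\le p-1$ and $b\le p-1$, the hypothesis \eqref{22 inee} forces $a+c\ge p$ and $b+c\ge p$; hence $(2p-1-a-c)!$ is prime to $p$, whereas $(b+c)!$ carries exactly one factor $p$, as does $(2p-1-a)!$ since $p\le2p-1-a\le2p-2$. The factorials $b!$, $c!$ and $(a+b+c-2p+1)!$ are prime to $p$; the factor $(2c)!$ contributes a single $p$ exactly when $2c>p$, and $(a+b+2c-2p+1)!$ contributes a single $p$ exactly when $a+b+2c\ge3p-1$. Collecting these valuations: in case (i) with $2c<p$, numerator and denominator each contain one factor $p$, so $S$ is a $p$-unit; in case (i) with $2c>p$, the numerator contains one more factor $p$ than the denominator, so $\bar S(a,b,c;2,2)=0$, which matches the vanishing of $(2c)!/c!$ in \eqref{22-}; and in case (ii), where \eqref{22 inee} together with $a+b+2c\ge3p-1$ force $2c\ge p+1$, numerator and denominator each contain two factors $p$, so $S$ is again a $p$-unit, proving non-vanishing.

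The last step is the reduction modulo $p$. For each factorial $m!$ with $p\le m\le2p-2$ I would strip its single factor $p$ and apply Wilson's theorem in the form $m!/p=(p-1)!\,\prod_{k=p+1}^m k\equiv-(m-p)!\pmod p$; applied to $(b+c)!$ and $(2p-1-a)!$, and in case (ii) also to $(2c)!$ and $(a+b+2c-2p+1)!$, this replaces them by $-(b+c-p)!$, $-(p-1-a)!$, and $-(2c-p)!$, $-(a+b+2c-3p+1)!$. Once the paired factors $p$ are cancelled, the stripping signs multiply to $+1$ and only the prefactor $(-1)^c$ survives. The two remaining denominator factorials $(p-1-a)!$ and $(2p-1-a-c)!=(p-1-(a+c-p))!$ are then turned into the numerator factorials $a!$ and $(a+c-p)!$ by Lemma~\ref{lem ca}, contributing signs $(-1)^{a+1}$ and $(-1)^{a+c-p+1}$. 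The total sign is therefore $(-1)^{c}(-1)^{a+1}(-1)^{a+c-p+1}=(-1)^{2a+2c-p+2}=(-1)^p=-1$ since $p$ is odd, which gives \eqref{22-} in case (i) and \eqref{22+} in case (ii). The main obstacle is purely the bookkeeping: keeping the $p$-adic valuation count correct across the two case boundaries — in particular the borderline of case (i) with $2c>p$, where the formula must collapse to $0$ — and tracking the accumulated signs, while verifying at each step that the arguments of all surviving factorials lie in $\{0,\dots,p-1\}$ so that the outcome is a well-defined element of $\F_p$.
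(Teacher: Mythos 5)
Your proposal is correct and follows essentially the same route as the paper: express $S(a,b,c;2,2)$ as the Morris constant term with $\al=a+b+c+1-2p$, $\bt=2p-1-a-c$, $\ga=c$, and then reduce the resulting integer identity modulo $p$ via Wilson's theorem and Lemma~\ref{lem ca}. Your $p$-adic valuation bookkeeping (including the explicit treatment of the sub-case of (i) with $2c>p$, where both sides vanish) is a slightly more careful rendering of the same computation.
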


\begin{proof}

If $a+b+c \geq 2p-1$, then $a+c \geq p$ and $b+c\geq p$.

We have 
\bean
\label{SnM22}
S(a,b,c;2,2) \,=\,
\on{CT}\,
(x_1-x_2)^{2c}
\prod_{i=1}^2 x_i^{a+1-2p}(1-x_i)^{b}\,.
\eean
This is the constant term for Morris's identity with 
\bea
\al =a+b+c+1-2p, \quad \bt = 2p-1-a-c, \quad \ga=c.
\eea
By assumptions, these integers are non-negative, and Morris' identity can be applied to evaluate
\eqref{SnM22}. The identity gives
\bean
\label{SnMM22}
&&
S(a,b,c;2,2) \,=\, (-1)^{c}\,\frac{(2c)!}{c!}\,
 \\
 \notag
&&
\phantom{aaa}
\times\,
\frac{b!\,(b +c)!}
{(2p-1-a)!\, (2p-1-a-c)!\,(a+b  +c -2p+1)!\,(a+b+2c-2p+1)!}\,.
\eean
This is an element of  $\Z$.

\vsk.2>

Proof of part (i). We have in $\F_p$ that
\bea
\frac{(b+c)!}{(2p-1-a)!\, (2p-1-a-c)!} = (-1)^{c+1} (b+c-p)!a!(a+c-p)!\,.
\eea
Since $a+b+2c \leq 3p-2$, the factorials  $(a+b  +c -2p+1)!$ and $(a+b+2c-2p+1)!$ are factorials of
 non-negative integers which are less than $p$. Then
 \bea
 \bar S(a,b,c;2, 2) = - \frac{(2c)!}{c!}\,\frac{a!\,(a+c-p)!\,b!\,(b+c-p)!}{(a+b+c-2p+1)!\,(a+b+2c-2p+1)!}\,.
\eea
Part (i) is proved.

Proof of part (ii).  We have $2c>p$ since  $3p-1\leq a+b+2c$.  In this case we have in $\F_p$ that
\bea
\frac{(2c)!}{(a+b+2c-2p+1)!} = \frac{(2c-p)!}{(a+b+2c-3p+1)!} \,,
\eea
and hence $ \bar S(a,b,c;2, 2) $ is given by formula \eqref{22+}.
\end{proof}

\subsection{$p$-cycle $[1,2]_p$}

Denote 
\bean
\label{dl}
\dl = a+b+2c+1 - 2p.
\eean

\begin{lem}
\label{lem 12}

Assume that $0<a,b, c<p$.
\begin{enumerate}
\item[(i)]  If $\dl<0$, then $S(a,b,c;1,2)=0$.

\item[(ii)]  If $\dl=0$ and $a+b<p-1$, then
$S(a,b,c;1,2)=0$.

\item[(iii)] Let $\dl=0$ and $a+b\geq p-1$, then
\bean
\label{S12}
&&
\\
\notag
&&
\phantom{aaaaa}
\bar S(a,b,c;1,2) = 
 - \frac{(2c-1)!}{(c-1)!}\,
\frac{a!\,(a+c)!\,b!\,(b+c-p)!}{(a+b+c-p+1)!\,(a+b+2c -2p+1)!}\,,
\quad \on{if}\
b+c\geq p,
\\
\label{S12^}
&&
\\
\notag
&&
\phantom{aaaaa}
\bar S(a,b,c;1,2) = - \frac{(2c-1)!}{(c-1)!}\,
\frac{a!\,(a+c-p)!\,b!\,(b+c)!}{(a+b+c-p+1)!\,(a+b+2c -2p+1)!}\,,
\quad \on{if}\
a+c\geq p.
\eean
Moreover, in formulas \eqref{S12} and \eqref{S12^} we have 
\bean
\label{S12**}
\bar S(a,b,c;1,2) = 
(-1)^{b+1} \frac{a!\,b!}{(a+b-p+1)!}\,.
\eean

\end{enumerate}

\end{lem}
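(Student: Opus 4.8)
The plan is to compute $S(a,b,c;1,2)$, the coefficient of $x_1^{p-1}x_2^{2p-1}$ in $\Phi(x_1,x_2;a,b,c)$, by isolating the dependence on $x_2$. Since the $x_2$-degree of $\Phi$ is at most $a+b+2c$, part (i) is immediate: if $\dl<0$, that is $a+b+2c\leq 2p-2$, the monomial $x_2^{2p-1}$ cannot occur, so $S(a,b,c;1,2)=0$. Here $\dl$ is the quantity introduced in \eqref{dl}.

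When $\dl=0$ we have $a+b+2c=2p-1$, so $x_2^{2p-1}$ is the top power of $x_2$ in $\Phi$, and its coefficient is the product of the leading $x_2$-terms of the three factors $(x_1-x_2)^{2c}$, $x_2^a$ and $(1-x_2)^b$. Those leading terms are $x_2^{2c}$, $x_2^a$ and $(-1)^b x_2^b$, so the coefficient of $x_2^{2p-1}$ in $\Phi$, viewed as a polynomial in $x_1$, equals $(-1)^b x_1^a(1-x_1)^b$. Extracting the coefficient of $x_1^{p-1}$ and recognising it as an $\F_p$-beta integral gives
\[
S(a,b,c;1,2)=(-1)^b\int_{[1]_p}x^a(1-x)^b\,dx .
\]
I then invoke Lemma \ref{lem bfun}. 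If $a+b<p-1$ then \eqref{bf2} yields $0$, which is part (ii). If $a+b\geq p-1$ then \eqref{bf1} gives the value $-\,a!\,b!/(a+b-p+1)!$, whence
\[
\bar S(a,b,c;1,2)=(-1)^{b+1}\,\frac{a!\,b!}{(a+b-p+1)!}\,,
\]
which is precisely \eqref{S12**}.

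It remains to rewrite \eqref{S12**} in the two Morris-type shapes \eqref{S12} and \eqref{S12^}. First I would record the arithmetic forced by the hypotheses of part (iii): combining $\dl=0$ with $a+b\geq p-1$ gives $2c\leq p$, hence $2c<p$ since $p$ is odd, and then $a+b=2p-1-2c\geq p$, so $a+b-p+1=p-2c\geq 1$; moreover $(a+c)+(b+c)=2p-1$ forces exactly one of $a+c\geq p$, $b+c\geq p$ to hold. Starting from $a!\,b!/(p-2c)!$ I would reintroduce the factors $(a+c)!$ and $(b+c)!$ and transfer the single factor of $p$ to whichever of them exceeds $p$: in the regime $b+c\geq p$ one replaces $(b+c)!$ by $(b+c-p)!$ and keeps $(a+c)!$, producing \eqref{S12}, while in the regime $a+c\geq p$ one replaces $(a+c)!$ by $(a+c-p)!$ and keeps $(b+c)!$, producing \eqref{S12^}. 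The conversions use only the complementary-factorial identity \eqref{ca} of Lemma \ref{lem ca}, applied to the three pairings $(a+c)!\,(p-1-a-c)!$, $(c-1)!\,(p-c)!$ and $(2c-1)!\,(p-2c)!$, together with Wilson's theorem to account for the lone factor of $p$.

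The only genuine obstacle is the sign bookkeeping in this last step. Each application of \eqref{ca} contributes a sign governed by the parity of $a+c$ or of $c$, and these must be reconciled with the factor $(-1)^{b+1}$ coming from \eqref{S12**}. The computation closes precisely because $a+b=2p-1-2c$ is odd, so that $(-1)^{b+1}=(-1)^{a}$; keeping careful track of this parity is what makes the two rewritings land on the stated expressions, and it is the step most prone to an errant sign.
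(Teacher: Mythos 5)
Your proposal is correct and follows essentially the same route as the paper: part (i) by degree count in $x_2$, the reduction at $\dl=0$ to $(-1)^b\int_{[1]_p}x^a(1-x)^b\,dx$ via the leading $x_2$-coefficients, Lemma \ref{lem bfun} for parts (ii) and \eqref{S12**}, and the same three applications of Lemma \ref{lem ca} (to $(a+c)!\,(p-1-a-c)!$, $(c-1)!\,(p-c)!$, $(2c-1)!\,(p-2c)!$) to recast \eqref{S12**} as \eqref{S12} and \eqref{S12^}, with the sign closing because $a+b=2p-1-2c$ is odd.
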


\begin{cor}
If $a+c<p$ and $b+c\leq p-1$, then $\delta<0$ and hence $\bar S(a,b,c;1,2)=0$.

\end{cor}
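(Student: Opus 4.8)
The plan is to reduce the whole statement to the sign of $\dl = a+b+2c+1-2p$ and then invoke part (i) of Lemma \ref{lem 12}. First I would regroup the parameters so as to isolate the two quantities that the hypotheses actually constrain, writing $\dl = (a+c) + (b+c) + 1 - 2p$. The first hypothesis $a+c < p$ gives $a+c \le p-1$, since $a$ and $c$ are integers, and the second hypothesis supplies $b+c \le p-1$ directly. Adding these two bounds yields $(a+c)+(b+c) \le 2p-2$, and therefore $\dl \le (2p-2)+1-2p = -1 < 0$.

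Once $\dl < 0$ is in hand, the conclusion is immediate: part (i) of Lemma \ref{lem 12} asserts that $\dl < 0$ forces $S(a,b,c;1,2) = 0$, so its projection $\bar S(a,b,c;1,2)$ to $\F_p$ vanishes as well. There is no genuine obstacle in this argument, and neither induction nor the Morris identity is needed. The only point worth flagging is the passage from the strict inequality $a+c < p$ to $a+c \le p-1$: it is precisely this use of integrality that lets the strict and non-strict hypotheses combine to drive $\dl$ \emph{strictly} below zero rather than merely to the boundary value $\dl = 0$, which is the borderline case treated separately in parts (ii) and (iii) of the lemma.
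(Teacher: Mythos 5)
Your proof is correct and is exactly the intended argument: the integrality step $a+c<p\Rightarrow a+c\le p-1$ combined with $b+c\le p-1$ gives $\dl\le -1<0$, and then part (i) of Lemma \ref{lem 12} finishes it. The paper leaves this corollary without an explicit proof precisely because this is the immediate deduction.
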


\begin{proof}

If  $a+b+2c < 2p-1$, then $x_2^{2p-1}$ cannot be reached and $S(a,b,c;1,2)=0$.
This proves part (i).  If $a+b+2c=2p-1$, then
\bea
\int_{[1,2]_p} (x_1-x_2)^{2c}\prod_{i=1}^2x_i^a(1-x_i)^bdx_1dx_2
=(-1)^b\int_{[1]_p}x_1^a(1-x_1)^bdx_1.
\eea
The second $\F_p$-integral is zero, if $a+b<p-1$ (that implies part (ii)) and equals
$- \frac{a!b!}{(a+b-p+1)!}$ otherwise. 
Thus if  $a+b\geq p-1$, then  $S(a,b,c;1,2)$ is given by formula \eqref{S12**}.

\vsk.2>
We have
\bea
&&
\phantom{aaaaaaaaaaaa}
\frac1{(a+b-p+1)!} = \frac1{(p-1-(2c-1))!}= (2c-1)!\,,
\\
&&
(a+b+c-p+1)! =(p-1-(c-1))!\,,
\qquad 
(a+b+2c-2p+1)! = (0)!=1, 
\\
&&
\phantom{aaaaaaaaaaaaaaaaaaaa}
(b+c-p)! = (p-1-(a+c))!\,.
\eea
Hence
 $(a+c)! (b+c-p)! = (-1)^{a+c+1}$. 
Applying these identities to formula \eqref{S12**} we obtain   \eqref{S12}.
Formula \eqref{S12^} is proved similarly.
\end{proof}

\begin{thm}
\label{thm 2d12}

Assume that $0<a,b, c<p$ and $0<\dl$.
\begin{enumerate}

\item[(i)] 

If $\quad
2c<p,   \quad
a+c \leq p-1,  \quad b+c \geq p$, \quad
then 
\bean
\label{S12*}
\bar S(a,b,c;1,2) 
= -\, \frac{(2c-1)!}{(c-1)!}\,
\frac{a!\,(a+c)!\,b!\,(b+c-p)!}{(a+b+c-p+1)!\,(a+b+2c-2p+1)!}\,.
\eean

\item[(ii)] If $\quad
2c<p,   \quad
a+c \geq p,  \quad b+c < p$, \quad
then 
\bean
\label{S121}
\bar S(a,b,c;1,2) 
= -\, \frac{(2c-1)!}{(c-1)!}\,
\frac{a!\,(a+c-p)!\,b!\,(b+c)!}{(a+b+c-p+1)!\,(a+b+2c-2p+1)!}\,.
\eean

\item[(iii)]

If \ $2c<p,  \quad a+b+c < 2p-1, \quad
a+c \geq p,  \quad b+c \geq  p$, \
then  $\bar S(a,b,c;1,2)=0$.

\item[(iv)]
 If \ $2c<p$, \ \  $a+b+c \geq  2p-1$.
 Then \quad $a+c\geq p$, \quad $b+c \geq p$ \quad and
\bean
\label{S12.1}
\phantom{aaaaaa}
\bar S(a,b,c;1,2)
&=&
\frac{(2c-1)!}{(c-1)!}\,
\frac{a!\,(a+c-p)!\,b!\,(b+c-p)!}{(a+b+c-2p+1)!\,(a+b+2c-2p+1)!}\,.
\eean

\item[(v)]
If \  $2c>p$, \  $a+c \geq p$, \ then \ $\bar S(a,b,c;1,2)=0$.

\item[(vi)]
If\  \ $2c>p$,\  \ $b+c \geq p$,\   \ then \ $\bar S(a,b,c;1,2)=0$.

\end{enumerate}

\end{thm}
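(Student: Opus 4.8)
The backbone is the pair of recursions \eqref{Ar1}--\eqref{Ar2}, which lower $\dl=a+b+2c+1-2p$ by one when $a$ (resp.\ $b$) is decreased by one; so the plan is a descent in $\dl$ to the base case $\dl=0$ furnished by Lemma \ref{lem 12}. The point that makes the closed forms compatible with this descent is purely arithmetic: modulo $p$ one has $a+b+c+1\equiv a+b+c-p+1$ and $a+b+2c+1\equiv a+b+2c-2p+1$, so for each candidate right-hand side $F$ the ratio $F(a,b,c)/F(a-1,b,c)$ equals $a(a+c)/\big((a+b+c+1)(a+b+2c+1)\big)$ in $\F_p$; that is, $F$ satisfies \eqref{Ar1} literally (and, symmetrically, \eqref{Ar2}). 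Hence in each regime it will be enough to (a) confirm that the asserted formula solves the relevant recursion, (b) check that the denominators $(a'+b+c+1)(a'+b+2c+1)$ stay nonzero in $\F_p$ all along the chosen reduction path, and (c) match the endpoint with Lemma \ref{lem 12}.

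For part (i) I would fix $b,c$ and lower $a$ via \eqref{Ar1} down to $a_0=2p-1-b-2c$, where $\dl=0$; throughout, $a'+c\le p-1$ and $b+c\ge p$ force $a'+b+c+1\in[p+1,2p-1]$ and $a'+b+2c+1\equiv\dl(a')\in[1,c-1]$, both nonzero mod $p$, and the endpoint value is \eqref{S12}, which is exactly the claimed $F$ at $a_0$. Part (ii) is the mirror image via \eqref{Ar2}, landing on the companion $\dl=0$ formula of Lemma \ref{lem 12}. The vanishing in (iii) comes from a zero of the \emph{numerator}: since $a+c\ge p$, lowering $a$ passes through $a'=p-c$, where the factor $a'+c=p\equiv0$ gives $\bar S(p-c,b,c;1,2)=0$; the hypothesis $a+b+c\le 2p-2$ forces $b\le p-2$ and keeps both denominators nonzero from $a'=p-c$ up to the target, so the zero propagates upward. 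Parts (v) and (vi) are vanishing statements of the same flavor for $2c>p$ (where, tellingly, the universal prefactor $(2c-1)!/(c-1)!$ is already $\equiv0$): I would again locate the factor $a'+c\equiv0$ (resp.\ $b'+c\equiv0$) at $a'=p-c$ and propagate, but here the denominator bookkeeping differs because $\dl$ may exceed $p$, so the few boundary configurations (notably $c=p-1$, and a possible $\dl=0$ crossing where a denominator vanishes) must be dispatched by direct inspection or by the $a\leftrightarrow b$ symmetry of $\Phi$.

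The genuine obstacle is part (iv). The locus $a+b+c=2p-1$ --- exactly the boundary of the region in (iv) --- is a zero of the recursion denominator $a+b+c+1$, so \emph{any} descent toward $\dl=0$ must cross it, and there \eqref{Ar1} ceases to determine $\bar S$: the value jumps off the branch forced by \eqref{S12} onto the branch \eqref{S12.1}. Consequently (iv) cannot be reached from the $\dl=0$ base case, and I would instead evaluate $\bar S(a,b,c;1,2)$ \emph{directly} on the resonant boundary $a+b+c=2p-1$ (where $\dl=c$). Extracting the coefficient of $x_1^{p-1}x_2^{2p-1}$ from the expansion of $(x_1-x_2)^{2c}$ there collapses the $x_2$-degree count to $c\le k\le 2c$, yielding a single binomial sum, which I expect to take the shape $(-1)^{c+1}\sum_j(-1)^j\binom{2c}{c+j}\binom{b}{j}\binom{b}{p-j}$; evaluating it in $\F_p$ by Lucas' Theorem \ref{thm L} together with Lemmas \ref{lem ca} and \ref{lem ga}, and matching it to \eqref{S12.1} with $a+b+c-2p+1=0$, is the computational heart of the argument.

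Once the boundary values are in hand, increasing $a$ by \eqref{Ar1} moves strictly into the interior, where $a'+b+c+1\in[2p+1,3p-2]$, $a'+b+2c+1\equiv\dl(a')\in[c+1,p-2]$, and $a'+c\in[p+1,2p-2]$ are all nonzero mod $p$, so the formula propagates inward to all of (iv). I expect this boundary identity to be the hard step; quoting the already-proved $[2,2]$-evaluation (Theorem \ref{thm 2d2}) through $\bar S(a,b,c;1,2)=-\tfrac12\bar S(a,b,c;2,2)$ is not an option, since that relation is itself only obtained by evaluating both sides.
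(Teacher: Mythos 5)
Your treatment of parts (i)--(iii) is essentially the paper's argument: descend via \eqref{Ar1}/\eqref{Ar2} to the $\dl=0$ base case of Lemma \ref{lem 12}, with the observation that the closed forms satisfy the recursions literally because $a+b+c-p+1\equiv a+b+c+1$ and $a+b+2c-2p+1\equiv a+b+2c+1$ in $\F_p$, and with the vanishing in (iii) coming from a numerator factor $a'+c=p$. Your diagnosis that the descent for part (iv) is blocked by the resonance $a+b+c+1\equiv 0$ at $a+b+c=2p-1$ is also correct and is exactly why the paper changes base point there.

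The genuine gap is in part (iv) (and, less severely, in (v)--(vi)): you never establish a base case. You propose to evaluate $\bar S$ directly on the locus $a+b+c=2p-1$ as a double binomial sum, but you only state what you ``expect'' that sum to look like and that Lucas' theorem should close it; this is precisely the hard computation your argument must supply, and it is not done. The paper avoids it entirely by ascending in $a$ to the extreme value $a=p-1$, where the coefficient extraction in $x_1$ trivializes ($x_1^{a}=x_1^{p-1}$ forces the constant term of $(x_1-x_2)^{2c}(1-x_1)^b$ in $x_1$, namely $x_2^{2c}$), so the two-dimensional integral collapses to the one-dimensional $\F_p$-beta integral $\int_{[1]_p}x^{2c-1}(1-x)^b\,dx=-\tfrac{(2c-1)!\,b!}{(b+2c-p)!}$ of Lemma \ref{lem bfun}; one then descends from $a=p-1$ into region (iv), where all connecting factors are checked to be units. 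The same $a=p-1$ endpoint gives $S(p-1,b,c;1,2)=0$ when $2c>p$ (the exponent $p-1+2c$ overshoots $2p-1$), which settles (v)--(vi) with only the \emph{denominators} of the connecting product needing to be nonzero; your alternative of seeding a zero at $a'=p-c$ and propagating it upward runs into exactly the denominator vanishings ($b=p-1$, $b+c\equiv p-1$, or a $\dl=0$ crossing) that you defer to ``direct inspection'' without resolving. Until the boundary evaluation in (iv) is actually carried out and the exceptional configurations in (v)--(vi) are dispatched, the proof is incomplete; importing the single idea ``push $a$ to $p-1$ and reduce to a beta integral'' repairs all three parts at once.
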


\begin{proof}

Under the assumptions of part (i), we apply the  recurrence relations \eqref{Ar1} 
and obtain
\bean
\label{3.6.1}
\phantom{aaa}
\bar S(a,b,c;1,2)
& =& 
\bar S(a-\delta, b,c;1,2)\,
\\
\notag
&\times &
\frac{a(a-1)\dots(a-\delta+1)\,(a+c)(a+c-1)\dots(a+c-\delta+1)}
{(a+b+c+1)(a+b+c)\dots(a+b+c-\delta+2)\,\delta!}\,.
\eean
Notice that $p>a-\dl=(p-1-b)+ (p-2c)>0$. 
We  check that all the factors in this formula are not divisible by $p$.

Indeed,  the product $(a+b+c+1) (a+b+c)\dots
(a+b+c-\delta+2)$ is non-zero in $\F_p$, since
 the first factor $a+b+c+1$ is less than $2p$ and the last factor
$a+b+c-\delta+2 = p+(p-c+1)$ is greater than $p$.
We also have $a+c-\dl+1=2p-b-c>0$.

We evaluate $S(a-\delta, b,c;1,2)$ in \eqref{3.6.1} by formula \eqref{S12} and obtain
\eqref{S12*}. Part (i) is proved.

\vsk.2>

The proof of part (ii) uses the relation \eqref{Ar2} and is similar to the proof of part (i).

\vsk.2>

To prove part (iii) denote $a+c=p+\al,\ b+c=p+\beta$. Then $\delta = \al+\beta+1$.
The product $(a+b+c+1) (a+b+c)\dots
(a+b+c-\delta+2)$ is non-zero in $\F_p$ as before.
We also have  $a+b - \delta= a+b - (a+b+2c-2p+1) = 2p-1-2c>0$.
Now we apply the recurrence  relations and write
\bea
S(a,b,c;1,2) = S(a-1,b,c;1,2)\,\frac{a(p+\al)}{(a+b+c+1)\delta}\,,
\eea
if $a>0$, or write 
\bea
S(a,b,c;1,2) = S(a,b-1,c;1,2)\,\frac{b(p+\beta)}{(a+b+c+1)\delta}\,.
\eea
 Then we apply the same transformations to that
$S(a-1,b,c;1,2)$ or $S(a,b-1,c;1,2)$ which was obtained after the first transformation. 
Repeat this procedure $\delta$ times.
As a result we will obtain a formula
$S(a,b,c;1,2) = C\, S(a',b',c;1,2)$, where $a'+b'+2c=2p-1$ and
$C$ is a ratio, whose denominator equals 
$(a+b+c+1)(a+b+c)\dots(a+b+c-\delta+2)\,\delta!$ and the numerator equals zero.
Part (iii) is proved.

\vsk.2>
Proof of part (iv).  If $a=p-1$, then
\bean
\label{p12}
\bar S(p-1,b,c; 1,2) 
&=& \int_{[2]_p} x_2^{2c+p-1} (1-x_2)^bdx_2
\\
\notag
&=&  \int_{[1]_p} x_2^{2c-1} (1-x_2)^bdx_2 = -\frac{(2c-1)!b!}{(b+2c-p)!}\,.
\eean
That formula agrees with \eqref{S12.1}.

If $a<p-1$,  we use the  recurrence  relations and write
\bea
S(a,b,c;1,2)= S(p-1, b,c;1,2) \,\prod_{i=1}^{p-1-a}\frac{(a+b+c +1+i) (a+b+2c +1+i)}{(a+i)(a+c+i)}\,.
\eea
Notice that all factors in the last product are non-zero in $\F_p$. Indeed for the smallest factor in the numerator we have
$a+b+c+1+1\geq 2p+1$ and for the largest we have
$a+b+2c+1+p-1-a =p+b+2c<3p$. In the denominator we have $p< a+c+1$ and $a+c + p-1-a =p-1+c<2p$.

 We have the following identities in $\F_p$:
 \bea
&&
\prod_{i=1}^{p-1-a} (a+b+c+1+i)=\frac{(b+c-p)!}{(a+b+c -2p+1)!}\,,
\\
&&  \prod_{i=1}^{p-1-a} (a+b+2c+1+i)=\frac{(b+2c-p)!}{(a+b+2c -2p+1)!}\,,
 \\
 &&
  \prod_{i=1}^{p-1-a} (a+i)=\frac{-1}{(a)!}\,,
 \qquad
 \prod_{i=1}^{p-1-a} (a+c+i)=\frac{(c-1)!}{(a+c-p)!}\,.
\eea
  Together with \eqref{p12} they prove part (iv).
\vsk.2>

\vsk.2>

Proof of  part (v).  If $a=p-1$. Then 
$S(p-1,b, c;1,2) = (-1)^b\int_{[2]_p} x_2^{p-1 +2c}(1-x_2)^b =0$ since
$p-1+2c>2p-1$.

Let $a<p-1$. Then
\bea
S(a,b,c;1,2) = S(p-1, b,c;1,2)\,\prod_{i=1}^{p-1-a}\frac{(a+b+c +1+i) (a+b+2c +1+i)}{(a+i)(a+c+i)}\,.
\eea
Notice that $\prod_{i=1}^{p-1-a}(a+i) = (a+1) \dots (p-1) \ne 0$ in $\F_p$. 
We have $a+c+1>p$ and
$a+c+p-1-a= p-1+c < 2p$. Hence
$\prod_{i=1}^{p-1-a}(a+c+i)  \ne 0$ in $\F_p$.
We also have
$S(p-1, b,c;1,2) = (-1)^b\int_{[2]_p} x_2^{p-1 +2c}(1-x_2)^b =0$ since
$p-1+2c>2p-1$.   Hence $\bar S(a,b,c;1,2)=0$. Part (v) is proved.

\vsk.2>
Part (vi) is proved similarly to part (v).
\end{proof}

\subsection{$p$-cycle $[1,3]_p$}

\begin{thm}
\label{thm 2d13}

Assume that $0<a,b, c<p$.
\begin{enumerate}
\item[(i)]  If $a+b+2c < 3p-1$, then $S(a,b,c;1,3)=0$. In particular if $2c<p$,  then $S(a,b,c;1,3)=0$.

\item[(ii)]  If $a+b+2c \geq 3p-1$, then
\bean
\label{S13}
\phantom{aaaa}
\bar S(a,b,c;1,3) = \,
  \frac{(2c-1-p)!}{(c-1)!}\,
\frac{a!\,(a+c-p)!\,b!\,(b+c-p)!}{(a+b+c-2p+1)!\,(a+b+2c -3p+1)!}\,.
\eean

\end{enumerate}

\end{thm}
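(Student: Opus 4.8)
For part (i), I would argue purely by degree in $x_2$. In $\Phi(x_1,x_2;a,b,c)=(x_1-x_2)^{2c}\prod_{i=1}^2 x_i^a(1-x_i)^b$ the highest power of $x_2$ arises from taking $x_2^{2c}$ out of $(x_1-x_2)^{2c}$, $x_2^a$ out of $x_2^a$, and $x_2^b$ out of $(1-x_2)^b$, so no monomial of $x_2$-degree exceeding $a+b+2c$ occurs. Hence if $a+b+2c<3p-1$ the monomial $x_1^{p-1}x_2^{3p-1}$ is absent and $S(a,b,c;1,3)=0$. Since $0<a,b,c<p$, the hypothesis $2c<p$ gives $a+b+2c\le 3(p-1)<3p-1$, which is the stated special case.

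For part (ii), I would follow the boundary-reduction scheme used for the cycle $[1,2]_p$ in Theorem \ref{thm 2d12}(iv), because here the exponents of $x_1$ and $x_2$ in the constant-term representation of $S(a,b,c;1,3)$ are unequal, so Morris' identity \eqref{Mim} (which needs equal exponents, as in the $[1,1]_p$ and $[2,2]_p$ cases) does not apply directly. I first dispose of the base case $a=p-1$: extracting the coefficient of $x_1^{p-1}$ in $\Phi(x_1,x_2;p-1,b,c)$ forces the $x_1^0$-part of $(x_1-x_2)^{2c}(1-x_1)^b$, namely $x_2^{2c}$, so that $\bar S(p-1,b,c;1,3)=\int_{[3]_p}x_2^{2c+p-1}(1-x_2)^b\,dx_2$. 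Writing $x_2^{2c+p-1}=x_2^{2p}\,x_2^{2c-p-1}$, which is legitimate because $a+b+2c\ge 3p-1$ with $a,b<p$ forces $2c>p$, reduces this to $\int_{[1]_p}x_2^{2c-p-1}(1-x_2)^b\,dx_2$, and Lemma \ref{lem bfun} evaluates it as $-(2c-p-1)!\,b!/(b+2c-2p)!$; the bound $b+2c\ge 2p$ (from $a+b+2c\ge 3p-1$ and $a\le p-1$) keeps this beta integral in its nonvanishing range.

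For general $a<p-1$ I would iterate the recursion \eqref{Ar1} from $a$ up to $p-1$, obtaining
\[
\bar S(a,b,c;1,3)=\bar S(p-1,b,c;1,3)\,\prod_{i=1}^{p-1-a}\frac{(a+b+c+1+i)(a+b+2c+1+i)}{(a+i)(a+c+i)},
\]
which is the same product as in Theorem \ref{thm 2d12}(iv). The crux is to verify that every factor is a unit of $\F_p$, so the recursion is invertible. From $a+b+2c\ge 3p-1$ and $a,b,c<p$ one gets $a+b+c\ge 2p$, $a+c\ge p+1$, and $b+c\ge p+1$, and hence the four blocks occupy intervals free of multiples of $p$: with all products over $i=1,\dots,p-1-a$, the factors $a+i$ run through $[a+1,p-1]$, the factors $a+c+i$ through $(p,2p)$, the factors $a+b+c+1+i$ through $(2p,3p)$, and the factors $a+b+2c+1+i$ through $(3p,4p)$. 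Reducing modulo $p$ then yields $\prod(a+i)\equiv -1/a!$, $\prod(a+c+i)\equiv (c-1)!/(a+c-p)!$, $\prod(a+b+c+1+i)\equiv (b+c-p)!/(a+b+c-2p+1)!$, and $\prod(a+b+2c+1+i)\equiv (b+2c-2p)!/(a+b+2c-3p+1)!$.

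Finally I would substitute these reductions together with the base value into the displayed product: the factor $(b+2c-2p)!$ cancels against the denominator of $\bar S(p-1,b,c;1,3)$, the two minus signs combine to a plus, and what remains is exactly \eqref{S13}. As a consistency check, setting $a=p-1$ and applying Lemma \ref{lem ca} to $(2c-p-1)!(2p-2c)!=-1$ returns the base case. The main obstacle is the modular range analysis of the recursion factors, in particular the borderline block $a+b+2c+1+i$: the bounds $a+b+2c+2\ge 3p+1$ and $b+2c+p\le 4p-3$ are what keep it strictly inside $(3p,4p)$. This is precisely the step where the present case parts ways with $[1,2]_p$---since $2c>p$ here, that block sits one multiple of $p$ higher than in Theorem \ref{thm 2d12}(iv), producing the extra shifts $(2c-1-p)!$ and $(a+b+2c-3p+1)!$ that appear in \eqref{S13}.
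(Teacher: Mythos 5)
Your proof is correct and follows essentially the same method as the paper: part (i) by the same degree count in $x_2$, and part (ii) by iterating the recursion \eqref{Ar1} until the two-dimensional integral collapses to a one-dimensional $\F_p$-beta integral evaluated by Lemma \ref{lem bfun}. The only difference is the anchor point --- you run the recursion up to $a=p-1$ (as the paper does for the cycle $[1,2]_p$ in part (iv) of Theorem \ref{thm 2d12}), whereas the paper's own proof of this theorem runs it down to $a-\delta$ with $(a-\delta)+b+2c=3p-1$; your range checks on the four factor blocks and the resulting factorial bookkeeping are accurate and reproduce \eqref{S13}.
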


\begin{proof}
Part (i) is clear. 

Proof of  part (ii).  Let $\delta= a+b+2c-3p+1$, then
$a-\delta +1 = 3p-b-2c>0$, $a+c-p-\delta+1 =2p-b-c>0$.
 Hence
\bean
\label{SS}
&&
\\
\notag
&&
\bar S(a,b,c;1,3) 
= \bar S(a-\delta,b,c;1,3) \,\prod_{i=1}^{\delta}\frac{(a+1-i)\,(a+c+1-i-p)}
{(a+b+c+2-i-2p)\,(a+b+2c +2-i-3p) }\,,
\\
\notag
&&
\bar S(a-\delta,b,c;1,3) 
= 
(-1)^b\,\int_{[1]_p} x_1^{3p-1-b-2c}(1-x_1)^b dx_1= (-1)^{b+1}\frac{(3p-1-b-2c)!\,b!}
{(2p-2c)!}\,.
\eean
Notice that $3p-1-b-2c \leq p-1$. Indeed, if  $3p-1-b-2c > p-1$, then $a+b+2c < 3p-1$, that contradicts to the assumptions.

We also have the following identities in $\F_p$\,;
\bea
&&
(3p-1-b-2c)!\prod_{i=1}^{\delta}(a+1-i) = (3p-1-b-2c)!\, a(a-1)\dots (3p-b-2c)= a!\,,
\\
&&
\prod_{i=1}^{\delta}(a+c+1-i-p) =  (a+c-p)(a+c-p-1)\dots (2p-b-c) 
\\
&&
= (a+c-p)!\,(b+c-p)!(-1)^{b+c},
\\
&&
\prod_{i=1}^\delta(a+b+c +2-i-2p) = (a+b+2c +1-i-2p)\dots(p+1-c)
\\
&&
= (a+b+c+1-2p)!\,(c-1)!(-1)^c.
\qquad
\frac 1{(2p-2c)!}=- (2c-1-p)!\,.
\eea
 These formulas imply part (ii).
\end{proof}

\subsection{$p$-cycle $[2,3]_p$}

\begin{thm}
\label{thm 2d23}

Assume that $0 < a,b,c<p$. Then  $\bar S(a,b,c;2,3)=0. $

\end{thm}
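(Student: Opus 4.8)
The plan is to compute $\bar S(a,b,c;2,3)$ as the coefficient of $x_1^{2p-1}x_2^{3p-1}$ in $\Phi(x_1,x_2;a,b,c)$ and to expose a \emph{hidden antisymmetry} by reducing modulo $p$. Throughout I write $[x_1^{d_1}x_2^{d_2}]P$ for the coefficient of $x_1^{d_1}x_2^{d_2}$ in a polynomial $P$. First dispose of the case $2c<p$ (so $2c\le p-1$): then $\deg_{x_2}\Phi\le 2c+a+b\le 3p-3<3p-1$, the monomial $x_1^{2p-1}x_2^{3p-1}$ does not occur, and $S(a,b,c;2,3)=0$ already in $\Z$. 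Since $2c$ is even while $p$ is odd we always have $2c\ne p$, so it remains only to treat $2c>p$, i.e. $2c\ge p+1$.

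Assume $2c>p$ and set $C=2c-p$, an \emph{odd} integer $\ge 1$. Pulling the factors $x_1^a x_2^a$ out of $\Phi$, the integer $S(a,b,c;2,3)$ equals
\be
[x_1^{\,2p-1-a}x_2^{\,3p-1-a}]\big((x_1-x_2)^{2c}(1-x_1)^b(1-x_2)^b\big),
\ee
and $\bar S(a,b,c;2,3)$ is its reduction mod $p$. By the Frobenius congruence $(x_1-x_2)^p\equiv x_1^p-x_2^p\pmod p$ we get $(x_1-x_2)^{2c}\equiv (x_1^p-x_2^p)(x_1-x_2)^{C}\pmod p$. The key phenomenon is the \emph{parity flip}: the even exponent $2c$ is replaced by the odd exponent $C$, so that
\be
G:=(x_1-x_2)^{C}(1-x_1)^b(1-x_2)^b
\ee
satisfies $G(x_2,x_1)=(-1)^{C}G(x_1,x_2)=-G(x_1,x_2)$, i.e. $G$ is antisymmetric.

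Multiplying $G$ by $x_1^p-x_2^p$ and extracting the coefficient of $x_1^{2p-1-a}x_2^{3p-1-a}$ splits the answer into two pieces,
\be
\bar S(a,b,c;2,3)\equiv [x_1^{\,p-1-a}x_2^{\,3p-1-a}]G-[x_1^{\,2p-1-a}x_2^{\,2p-1-a}]G\pmod p.
\ee
The second piece is the coefficient of the \emph{symmetric} monomial $x_1^{2p-1-a}x_2^{2p-1-a}$ in the antisymmetric polynomial $G$, hence equals $0$. The first piece requires $x_2$-degree $3p-1-a$, whereas $\deg_{x_2}G\le C+b=2c-p+b\le 2p-3<3p-1-a$ (using $a,b,c\le p-1$); so that coefficient is $0$ as well. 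Therefore $\bar S(a,b,c;2,3)=0$.

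The single load-bearing step is the parity flip. Over $\Z$ the factor $(x_1-x_2)^{2c}$ is symmetric and no cancellation is visible — indeed the $[1,1]$, $[2,2]$, $[1,2]$, $[1,3]$ evaluations above are typically nonzero. It is only after discarding one Frobenius factor $x_1^p-x_2^p$ that the surviving power $2c-p$ becomes odd and forces $G$ to be antisymmetric; this is precisely where the oddness of $p$ is used, and it also rules out the borderline value $2c=p$. The only place I would recheck carefully is the bookkeeping of the two shifted exponents produced by $x_1^p-x_2^p$, since an off-by-$p$ slip there is the sole realistic source of error; everything else is a one-line degree count.
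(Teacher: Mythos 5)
Your proof is correct, but it takes a different route from the paper. The paper proves this theorem with its recursion machinery: using \eqref{Ar1} it writes $S(a,b,c;2,3)$ as $S(a-\delta,b,c;2,3)$ times an explicit product of factors, where $\delta=a+b+2c-3p+1$, and then observes that the base case equals $(-1)^b\int_{[2]_p}x_1^{3p-1-b-2c}(1-x_1)^b\,dx_1$, which vanishes because $3p-1-b-2c\leq p-1$ so the exponent $2p-1$ is out of reach. Your argument instead splits $(x_1-x_2)^{2c}\equiv(x_1^p-x_2^p)(x_1-x_2)^{2c-p}$ via Frobenius, kills the diagonal contribution by the antisymmetry of $(x_1-x_2)^{2c-p}(1-x_1)^b(1-x_2)^b$ (here $2c-p$ is odd and $2$ is invertible in $\F_p$), and kills the off-diagonal contribution by a degree count; the case $2c<p$ is a pure degree count over $\Z$, and $2c=p$ is impossible since $p$ is odd. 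All the estimates check out: $\deg_{x_2}G\leq(2c-p)+b\leq 2p-3<3p-1-a$ and $2c+a+b\leq 3p-3<3p-1$ when $2c<p$. Your approach is entirely elementary and self-contained, sidestepping the recursion (and hence any concern about non-vanishing of its denominators); it is in fact the same Frobenius-plus-antisymmetry device the paper itself deploys in Section \ref{sec 4.8} to establish relation \eqref{rel 22-13} without evaluating its terms, so your proof fits naturally into the paper's own toolkit, just applied to the coefficient of $x_1^{2p-1}x_2^{3p-1}$ instead.
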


\begin{proof}
Clearly $S(a,b,c;2,3) =0$ if $a+b+2c<3p-1$. If $\delta =a+b+2c-3p+1\geq 0$,
then
\bea
S(a,b,c;2,3) 
= S(a-\delta,b,c; 2,3) \,\prod_{i=1}^{\delta}\frac{(a+1-i)\,(a+c+1-i-p)}
{(a+b+c+2-i - 2p)\,(a+b+2c +2-i-3p) }\,,
\\
S(a-\delta,b,c;2,3) 
= 
(-1)^b\,\int_{[2]_p} x_1^{3p-1-b-2c}(1-x_1)^bdx_1,
\phantom{aaaaaaaaaaaaaaaaaaaa}
\eea
where the last $\F_p$-integral is zero since $3p-1-b-2c \leq p-1$.
\end{proof}

\subsection{$p$-cycles $[l_1,l_2]_p$}

\begin{thm}
\label{thm 2dl1l2}

Assume that $0< a,b,c<p$ and  $l_1\leq l_2$. Then  $\bar S(a,b,c;l_1,l_2)=0$
if  $(l_1,l_2)\notin \{(1,1), (2,2), (1,2), (1,3)\}$.

\end{thm}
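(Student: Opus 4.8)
The plan is to argue entirely at the level of the support of the integer polynomial $\Phi(x_1,x_2;a,b,c)$, without any arithmetic modulo $p$. Since $\bar S(a,b,c;l_1,l_2)$ is the reduction modulo $p$ of the coefficient of $x_1^{l_1p-1}x_2^{l_2p-1}$ in $\Phi$, it suffices to show that for every admissible pair outside $\{(1,1),(2,2),(1,2),(1,3)\}$ this monomial simply fails to occur in $\Phi$, so that already $S(a,b,c;l_1,l_2)=0$ in $\Z$, with the sole exception of the pair $(2,3)$, which I will dispose of by quoting Theorem \ref{thm 2d23}. By the symmetry $S(a,b,c;l_1,l_2)=S(a,b,c;l_2,l_1)$ I may assume $1\leq l_1\leq l_2$ throughout.

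First I would record two elementary degree bounds. The total degree of $\Phi$ equals $2a+2b+2c$, while the total degree of the target monomial is $(l_1+l_2)p-2$; hence a necessary condition for non-vanishing is $(l_1+l_2)p-2\leq 2a+2b+2c$. Using $a,b,c\leq p-1$ this yields $(l_1+l_2)p\leq 6p-4$, so that $l_1+l_2\leq 5$ for every odd prime $p$. Next, the degree of $\Phi$ in the single variable $x_2$ is at most $2c+a+b$, since among the factors of $\Phi$ only $(x_1-x_2)^{2c}$, $x_2^a$ and $(1-x_2)^b$ contribute to $x_2$. Thus non-vanishing forces $l_2p-1\leq 2c+a+b\leq 4p-4$, whence $l_2p\leq 4p-3$ and therefore $l_2\leq 3$ for every $p\geq 3$.

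Combining $1\leq l_1\leq l_2\leq 3$ with $l_1+l_2\leq 5$ leaves exactly the five pairs $(1,1),(1,2),(1,3),(2,2),(2,3)$. Four of these are the pairs allowed by the theorem, so it only remains to eliminate $(2,3)$. This is precisely the content of Theorem \ref{thm 2d23}, which asserts $\bar S(a,b,c;2,3)=0$ under the standing hypothesis $0<a,b,c<p$; invoking it completes the argument.

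The computation is routine, and the genuine content lies in seeing that both degree inequalities are needed and that the degree count cannot by itself settle the last case. The single-variable bound $l_2\leq 3$ removes every pair with $l_2\geq 4$ but leaves $(3,3)$, while the total-degree bound $l_1+l_2\leq 5$ removes $(3,3)$ yet on its own still permits $(2,3)$; only the conjunction of the two, together with $l_1\leq l_2$, isolates the five candidate pairs. The main obstacle is therefore the boundary case $(2,3)$: for appropriate parameters the monomial $x_1^{2p-1}x_2^{3p-1}$ does appear in $\Phi$, so that its vanishing is a genuine modulo-$p$ phenomenon which cannot be read off from the Newton support and must instead be imported from Theorem \ref{thm 2d23}.
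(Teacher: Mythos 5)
Your proposal is correct and follows essentially the same route as the paper: the total-degree bound eliminates $(3,3)$, the single-variable degree bound eliminates all $l_2\geq 4$, and the remaining pair $(2,3)$ is disposed of by citing Theorem \ref{thm 2d23}. Your observation that the $(2,3)$ case is a genuine mod-$p$ vanishing not visible from the Newton support is accurate and matches why the paper must invoke that separate theorem.
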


\begin{proof} 
We have $\bar S(a,b,c;3,3)=0$, since the total degree of
$\Phi(x_1,x_2; a,b,c)$ is $2(a+b+c)$ and it is less than the total degree
$6p-2$ of the monomial $x_1^{3p-1}x_2^{3p-1}$. 

Let $l_1\leq l_2$. If $l_2\geq 4$, then
$a+b+2c< l_2p -1$, and $\bar S(a,b,c;l_1,l_2)=0$. All other cases are discussed in the previous theorems.
\end{proof}

\subsection{Relations between $p$-cycles}

The next theorem lists all the integers $0<a,b,c<p$ such  that there are more than one pair $l_1\leq l_2$ with non-zero
$\bar S(a,b,c;l_1,l_2)$. 

\begin{thm}
\label{thm rels}

If  $0<a,b,c<p$ and there are more than one pair $l_1\leq l_2$ such that
$\bar S(a,b,c;l_1,l_2)$ is non-zero, then all such $(a,b,c;l_1,l_2)$ are listed below.
\begin{enumerate}
\item[(i)]

If 
\bean
\label{R1}
\phantom{aaa}
2c<p, \ a+c\leq p-1, \ b+c\geq p,  \  a+b+2c\geq 2p-1,
\eean
then
$\bar S(a,b,c;1,1)$,  $\bar S(a,b,c;1,2)$, $\bar S(a,b,c;2,1)$ are non-zero and
\bean
\label{rel 11-12}
&&
\phantom{aaaa}
-\frac 12\,\bar S(a,b,c;1,1) = \bar S(a,b,c;1,2) = \bar S(a,b,c;2,1),
\\
\label{exc1}
&&
\bar S(a,b,c;1,1) = \frac{(2c)!}{c!}\,\frac{a!\,(a+c)!\,b!\,(b+c-p)!}{(a+b+c-p+1)!\,(a+b+2c-2p+1)!}\,.
\eean

\item[(ii)]

If 
\bean
\label{R2}
2c<p, 
\quad a+b+c \geq  2p-1, 
\eean
 then
$\bar S(a,b,c;2,2)$,  $\bar S(a,b,c;1,2)$, $\bar S(a,b,c;2,1)$ are non-zero and
\bean
\label{rel 22-12}
&&
\phantom{aaaa}
-\frac 12\,\bar S(a,b,c;2,2) = \bar S(a,b,c;1,2) = \bar S(a,b,c;2,1).
\\
\label{exc2}
&&
\bar S(a,b,c;2,2) =
 - \frac{(2c)!}{c!}\,\frac{a!\,(a+c-p)!\,b!\,(b+c-p)!}{(a+b+c-2p+1)!\,(a+b+2c-2p+1)!}\,.
\eean

\item[(iii)]

If 
\bean
\label{R3}
2c>p, \quad   a+b+2c\geq  3p-1,
\eean 
then
$\bar S(a,b,c;2,2)$, $\bar S(a,b,c;1,3)$, $\bar S(a,b,c;3,1)$
are non-zero and
\bean
\label{rel 22-13}
&&
\phantom{aaaa}
-\frac 12\,\bar S(a,b,c;2,2) =\bar S(a,b,c;1,3) = \bar S(a,b,c;3,1),
\\
\label{exc3}
&&
\bar S(a,b,c;2,2) =
 - \frac{(2c-p)!}{c!}\,\frac{a!\,(a+c-p)!\,b!\,(b+c-p)!}{(a+b+c-2p+1)!\,(a+b+2c-3p+1)!}\,.
\eean

\end{enumerate}

If $(a,b,c)$  does not satisfy the system of inequalities
$2c<p, \ a+c\leq p-1, \ b+c\geq p,  \  a+b+2c\geq 2p-1$,
and does not satisfy the  system of  inequalities
$2c<p, \ a+b+c \geq  2p-1$,  
and does not satisfy the  system of  inequalities
$2c>p, \   a+b+2c\geq  3p-1$,
then there exists at most one pair
$(l_1,l_2)$ with $l_1\leq l_2$ such that $\bar S(a,b,c;l_1,l_2)\ne 0$.

\end{thm}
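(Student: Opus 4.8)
The plan is to combine the vanishing and evaluation results of the preceding subsections into a single case analysis. By Theorem~\ref{thm 2dl1l2}, for $0<a,b,c<p$ the only pairs $l_1\le l_2$ for which $\bar S(a,b,c;l_1,l_2)$ can be nonzero are $(1,1)$, $(1,2)$, $(1,3)$ and $(2,2)$; moreover $\bar S(a,b,c;l_2,l_1)=\bar S(a,b,c;l_1,l_2)$, so it suffices to decide, for each of these four integrals, exactly where in the region $\{0<a,b,c<p\}$ it is nonzero, and then to locate the overlaps. The organizing device is that $2c\ne p$, since $p$ is odd; I therefore split the argument into the regimes $2c<p$ and $2c>p$.

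In the regime $2c<p$ one has $\bar S(a,b,c;1,3)=0$ by Theorem~\ref{thm 2d13}(i), so only $(1,1)$, $(1,2)$, $(2,2)$ remain. From Theorem~\ref{thm 2d2} and Lemma~\ref{lem i22}, $\bar S(a,b,c;2,2)\ne0$ precisely when $a+b+c\ge 2p-1$; this is the system \eqref{R2}, and it forces $a+c\ge p$, whence $\bar S(a,b,c;1,1)=0$ by Lemma~\ref{lem i11}, while Theorem~\ref{thm 2d12}(iv) gives $\bar S(a,b,c;1,2)\ne0$. This yields case~(ii), with the two nonzero integrals $(2,2)$ and $(1,2)$. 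When instead $a+b+c\le 2p-2$, so that $\bar S(a,b,c;2,2)=0$, a second nonzero integral can only be produced by making both $(1,1)$ and $(1,2)$ nonzero (note that $(1,1)$ and $(2,2)$ are never simultaneously nonzero, since one needs $a+c\le p-1$ and the other $a+c\ge p$). Reading off the criteria — $\bar S(a,b,c;1,1)\ne0$ requires $a+c\le p-1$ together with $a+b+2c\ge 2p-1$ once $b+c\ge p$ (Theorem~\ref{thm 2d1}(ii)), and $\bar S(a,b,c;1,2)\ne0$ requires the same $a+c\le p-1$, $b+c\ge p$ (Theorem~\ref{thm 2d12}(i), with the boundary $\delta=0$ handled by Lemma~\ref{lem 12}(iii)) — shows that the overlap is exactly the system \eqref{R1}, giving case~(i).

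In the regime $2c>p$ both $\bar S(a,b,c;1,1)$ and $\bar S(a,b,c;1,2)$ vanish identically: the nonvanishing criterion in Theorem~\ref{thm 2d1} requires $2c<p$, and Theorem~\ref{thm 2d12}(v),(vi) kill $\bar S(a,b,c;1,2)$ whenever $a+c\ge p$ or $b+c\ge p$, while if $a+c<p$ and $b+c<p$ then $\delta\le 0$ with the boundary excluded by Lemma~\ref{lem 12}(ii). Hence only $(2,2)$ and $(1,3)$ survive, and by Theorem~\ref{thm 2d2}(ii) and Theorem~\ref{thm 2d13}(ii) each of them is nonzero exactly when $a+b+2c\ge 3p-1$. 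This is the system \eqref{R3} and gives case~(iii).

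It remains to verify the relations and the final assertion. The explicit values \eqref{exc1}, \eqref{exc2}, \eqref{exc3} merely restate \eqref{12}, \eqref{22-}, \eqref{22+}. Each identity \eqref{rel 11-12}, \eqref{rel 22-12}, \eqref{rel 22-13} is checked by dividing the two relevant closed forms from the evaluation theorems: the common factor $a!\cdots b!\cdots$ cancels, the two forms carry opposite overall signs, and for \eqref{rel 11-12} and \eqref{rel 22-12} the surviving prefactor ratio is $\frac{(2c-1)!/(c-1)!}{(2c)!/c!}=\frac{c}{2c}=\frac12$, giving $\bar S(a,b,c;1,2)=-\frac12\,\bar S(a,b,c;1,1)$ and $\bar S(a,b,c;1,2)=-\frac12\,\bar S(a,b,c;2,2)$ respectively. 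For \eqref{rel 22-13} the analogous ratio is $\frac{c}{2c-p}$, which equals $\frac12$ in $\F_p$ because $2c-p\equiv 2c$, giving $\bar S(a,b,c;1,3)=-\frac12\,\bar S(a,b,c;2,2)$. The equalities involving $\bar S(a,b,c;2,1)$ and $\bar S(a,b,c;3,1)$ are instances of the symmetry $\bar S(a,b,c;l_1,l_2)=\bar S(a,b,c;l_2,l_1)$. Finally, the closing ``at most one pair'' statement is the complement of the above analysis: outside the three systems \eqref{R1}, \eqref{R2}, \eqref{R3}, no second nonzero pair arises. The main obstacle is the bookkeeping in the regime $2c<p$ — in particular disposing of the boundary cases $\delta=0$ through Lemma~\ref{lem 12} and confirming that the nonvanishing loci of $\bar S(a,b,c;1,1)$ and $\bar S(a,b,c;1,2)$ meet exactly along \eqref{R1} with no stray overlap.
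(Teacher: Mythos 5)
Your proposal is correct and follows exactly the paper's route: the paper proves this theorem "by inspection" of Theorems \ref{thm 2d1}, \ref{thm 2d2}, \ref{thm 2d12}, \ref{thm 2d13} (together with Theorem \ref{thm 2dl1l2} and the symmetry in $l_1,l_2$), deriving part (i) from Theorem \ref{thm 2d1}(ii) and Theorem \ref{thm 2d12}(i), part (ii) from Theorem \ref{thm 2d2}(i) and Theorem \ref{thm 2d12}(iv), and part (iii) from Theorem \ref{thm 2d2}(ii) and Theorem \ref{thm 2d13}(ii). You have simply carried out that inspection in full detail, including the factorial-ratio computation $c/(2c)=1/2$ (and $c/(2c-p)\equiv 1/2$ in $\F_p$) that yields the relations.
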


\begin{proof}  
The proof is by inspection of Theorems \ref{thm 2d1}, \ref{thm 2d2}, \ref{thm 2d12}, \ref{thm 2d13}.
In particular,  part (i) follows from part (ii) of Theorem \ref{thm 2d1} and part (i) of Theorem \ref{thm 2d12}.
Part (ii) follows from part (i) of Theorem \ref{thm 2d2} and part (iv) of Theorem \ref{thm 2d12}.
Part (iii) follows from part (ii) of Theorem \ref{thm 2d2} and part (ii) of Theorem \ref{thm 2d13}.
\end{proof}

\begin{ex} 
 For $p=7$, we have $\bar S(3,4,3;1,1)=1$,  $\bar S(6.6,3;2,2)=2$, 
 $\bar S(6,6,6;2,2)=5$ by formulas \eqref{exc1}, \eqref{exc2}, \eqref{exc3}, respectively.

\end{ex}

\begin{rem}
In analogy with   complex case one may think that the master polynomial defines a local system 
(depending on parameters $a,b,c$)
on the the two-dimensional affine space $\F_p^2$ with coordinates $x_1,x_2$ and with 
one-dimensional $p$-cohomology space. 
If $2c<p, \ a+c\leq p-1, \ b+c\geq p,  \  a+b+2c\geq 2p-1$, then  the $p$-cycles
 $[1,1]_p$, $[1,2]_p$, $[2,1]_p$ give non-zero elements of the 
 one-dimensional dual $p$-homology space and
  $2[1,1]_p + [1,2]_p + [2,1]_p \thicksim 0$, cf. \eqref{rel 11-12}.  
  It would be interesting to find that $p$-chain, whose $p$-boundary is $[1,1]_p + [1,2]_p + [2,1]_p$\,.

  The same question may be addressed to the relations in 
\eqref{rel 22-12} and \eqref{rel 22-13}.  The relation in \eqref{rel 22-13} is discussed below.

\end{rem}

\subsection{Relation \eqref{rel 22-13}} 
\label{sec 4.8}

In this section we prove \eqref{rel 22-13} without explicitly evaluating its terms.
Under assumptions \eqref{R3}, consider the two polynomials in $\F_p[x_1,x_2]$:
\bean
\label{exp1}
(x_1-x_2)^{2c} \prod_{i=1}^2x_i^a (1-x_i)^b  
&=&
 \sum_{d}\, \al_{d_1,d_2} \,x_1^{d_1} x_2^{d_2}.
\\
(x_1-x_2)^{2c-p} \prod_{i=1}^2x_i^a (1-x_i)^b  
&=&
 \sum_{d}\, \bt_{d_1,d_2} \,x_1^{d_1} x_2a^{d_2}.
\eean
By  the previous theorems, the first polynomial has exactly three non-zero coefficients
$\al_{d_1,d_2} $ with $(d_1,d_2)$ of the form $(l_1p-1, l_2p-1)$ for some positive integers $l_1,l_2$. These coefficients are
\bea
\al_{3p-1,p-1} =\bar S(a,b,c;3,1), \quad
\al_{2p-1,2p-1} =\bar S(a,b,c;2,2), \quad
\al_{p-1,3p-1} =\bar S(a,b,c;1,3).
\eea
Hence, the second polynomial has exactly two non-zero coefficients
$\bt_{d_1,d_2} $ with $(d_1,d_2)$ of the form $(l_1p-1, l_2p-1)$ for some positive integers $l_1,l_2$. These coefficients are
$\bt_{p-1,2p-1}$ and  $\bt_{2p-1,p-1}$\,, \  and
\bea
\al_{3p-1,p-1} =\bt_{2p-1, p-1}, \quad
\al_{2p-1,2p-1} =\bt_{p-1, 2p-1} - \bt_{2p-1, p-1},
\quad
\al_{p-1,3p-1} = - \bt_{p-1, 2p-1}.
\eea
We also have   $\bt_{p-1, 2p-1} = - \bt_{2p-1, p-1}$ since
the second polynomial is skew-symmetric in $x_1,x_2$. 
Hence,
$\al_{3p-1,p-1} =\bt_{2p-1, p-1}$, 
$\al_{2p-1,2p-1} = - 2\bt_{2p-1, p-1}$,
$\al_{p-1,3p-1} = \bt_{2p-1, 1p-1}$ and
\bean
\label{RR3}
-\frac 12 \al_{2p-1,2p-1} = \al_{p-1,3p-1} = \al_{3p-1,p-1}\,.
\eean

\bigskip

\end{document}